% last updated in April 2002 by Antje Endemann
% Based on CVPR 07 and LNCS, with modifications by DAF, AZ and elle, 2008 and AA, 2010, and CC, 2011

\documentclass[runningheads, a4paper]{llncs}
\usepackage{amsmath, amssymb, graphicx,algorithmic, algorithm,booktabs} 
\setcounter{tocdepth}{3}

\newcommand{\reals}{\mathbb{R}}
\newcommand{\gtri}{g^{*}}
\newcommand{\gtwo}{g^{\rm quad}}
\newcommand{\gmom}{g^{\rm mom}}
\newcommand{\gsos}{g^{\rm sos}}
\newcommand{\sdpexact}{{\sf SDP-EXACT}}

\usepackage{color}
\definecolor{dkmag}{rgb}{0.5,0,0.5}

\begin{document}
\mainmatter	
\title{A QCQP Approach to Triangulation} % Replace with your title
\titlerunning{A QCQP Approach to Triangulation}
\authorrunning{Chris Aholt, Sameer Agarwal and Rekha Thomas}
\author{Chris Aholt\inst{1}\and Sameer Agarwal\inst{2} \and Rekha Thomas\inst{1}}
\institute{University of Washington \and Google Inc.}

\maketitle

\begin{abstract}
Triangulation of a three-dimensional point from $n\ge 2$ two-dimensional images can be formulated as a quadratically constrained quadratic program. We propose an algorithm to extract candidate solutions to this problem from its semidefinite programming relaxations.
We then describe a sufficient condition and a polynomial time test for certifying when such a solution is optimal. This test has no false positives. Experiments indicate that false negatives are rare, and the algorithm has excellent performance in practice. We explain this phenomenon in terms of the geometry of the triangulation problem.

%A solution can be guaranteed to be globally optimal with no false positives. 
\end{abstract}
%!TEX root = paper.tex
\section{Introduction}
\label{sec:introduction}
We consider the problem of triangulating a point $X \in \reals^{3}$ from $n\ge2$ noisy image projections. This is a fundamental problem in multi-view geometry and is a crucial subroutine in all structure-from-motion systems~\cite{hartley-zisserman-2003}.

Formally, let the point $X\in\mathbb{R}^3$ be visible in $n\ge 2$ images. Also let  $P_{i} \in \mathbb{R}^{3\times 4}$ be a projective camera and $x_{i} \in \reals^{2}$ be the projection of $X$ in image $i$, i.e,
\begin{align}
x_{i} = \Pi P_{i} \tilde{X},\ \forall\ i = 1,\ldots,n,
\end{align}
where, using {\tt MATLAB} notation, $\tilde{X} = \begin{bmatrix}X; 1 \end{bmatrix}$ and $\Pi\begin{bmatrix}u;v;w\end{bmatrix} = \begin{bmatrix} u/w; v/w\end{bmatrix}$. 

Given the set $\{x_{i}\}$ of noise free projections, it is easy to determine $X$ using a linear algorithm based on singular value decomposition (SVD)~\cite{hartley-zisserman-2003}. However, in practice we are given $\widehat x_i = x_i + \eta_i$, where $\eta_i$ is noise, and there are no guarantees on the quality of the solution returned by the linear algorithm. 

For simplicity, we assume that  $\eta_i \sim \mathcal{N}(0,\sigma I)$. 
Then the {\em triangulation
problem} is to find the maximum likelihood estimate of $X$ given the noisy observations $\{\widehat{x}_{i}\}$. Assuming such a point $X$ always exists, this is equivalent to solving:
\begin{align}
	\arg \min_X \sum_i^n \| \Pi P_i \tilde{X} -  \widehat{x}_i\|^2. \label{eq:rational}
\end{align}
Here and in the rest of the paper we will ignore the constraint that the point $X$ has positive depth in each image. The above optimization problem is an instance of fractional programming which is in general hard~\cite{freund-jarre-2001}. An efficient and optimal solution of~\eqref{eq:rational} is the subject of this paper.

For $n=2$, Hartley \& Sturm showed that~\eqref{eq:rational} can be solved optimally in polynomial time~\cite{hartley-sturm-1997}. For $n=3$, a Gr\"obner basis based algorithm for~\eqref{eq:rational} was proposed in \cite{stewenius-et-al-2005}. This algorithm relies on the observation that generically, the optimal solution to~\eqref{eq:rational} is one among the $47$ solutions to a certain system of polynomial equations.  This Gr\"obner basis method is not usefully extendable to higher $n$ 
and efficient optimal triangulation for $n \geq 4$ views has remained an unsolved problem. Other approaches either use the linear SVD based algorithm as initialization followed by non-linear refinement which lead to locally optimal solutions with no guarantees on the run time complexity~\cite{hartley-zisserman-2003}, or optimal algorithms whose worst case complexity is exponential in $n$~\cite{kahl-et-al-2008,kahl-henrion-2007,lu-hartley-2007}.

We present a new triangulation algorithm for $n \ge 2$ views. Based on semidefinite programming, the algorithm in polynomial time either determines a globally optimal solution to~\eqref{eq:rational} or informs the user that it is unable to do so. Theoretically, the operating range (in terms of image noise) of the algorithm is limited and depends on the particular configuration of the cameras. In practice our method computes the optimal solution in the vast majority of test cases.  In the rare case that optimality cannot be certified, the algorithm returns a solution which can be used as an initializer for nonlinear least squares iteration.

The paper is organized as follows.  In Section~\ref{sec:quadratic} we formulate triangulation as a constrained quadratic optimization problem. We present semidefinite relaxations to this problem in Section~\ref{sec:semidefinite}. We propose our triangulation algorithm in  Section~\ref{sec:analysis} and analyze its performance. We also provide theoretical explanation for why the algorithm works. Section~\ref{sec:experiments} presents experiments on synthetic and real data and we conclude in  Section~\ref{sec:discussion} with a discussion.% and directions for future work.

%\subsubsection{Notation} 
{\bf Notation.} We will use {\tt MATLAB} notation to manipulate matrices and vectors, e.g., $A[1:2,2:3]$ refers to a $2 \times 2$ submatrix of $A$. $P= \{P_{1}, \ldots, P_{n}\}$ denotes the set of cameras. $x = \left[x_{1};\dots;x_{n}\right]$ denotes a vector of image points, one in each camera, and  
$\widehat{x} = \left[\widehat{x}_{1};\dots;\widehat{x}_{n}\right]$ denotes the vector of image observations. Both $x$ and $\widehat{x}$ lie in $\mathbb{R}^{2n}$. If $y \in \reals^{m}$ is a vector, then $\tilde{y} = \begin{bmatrix} y; 1\end{bmatrix}$ is the homogenized version of $y$. The inner product space of $k\times k$ real symmetric matrices
is denoted $\mathcal{S}^k$ with the inner product $\langle A,B\rangle =
\sum_{1\le i,j\le k}A_{ij}B_{ij}$. The set $\mathcal{S}^{k}_{+} \subseteq \mathcal{S}^{k}$ denotes the closed convex cone of positive semidefinite matrices. We write $A\succ 0$ (resp. $A\succeq 0$) to mean that $A$ is positive definite (resp. positive semidefinite).

%Recall that a symmetric matrix $A$ is {\em positive semidefinite} (resp. {\em positive definite}) if every eigenvalue of $A$ is nonnegative (positive). In this case we write $A\succeq 0$ ($A\succ 0)$.
%!TEX root = paper.tex

\section{Triangulation As Polynomial Optimization}
\label{sec:quadratic}
With an eye towards the future, let us re-state the triangulation problem~\eqref{eq:rational} as the constrained optimization problem
\begin{align}
\underset{x_{1},\dots,x_{n},X}{\arg\min}\  \sum_{i} \| x_{i} - \hat{x}_{i}\|^{2},\quad \text{s.t.} \ x_{i} = \Pi P_{i} \tilde{X},\quad \forall i = 1,\ldots,n.
\label{eq:rational-constrained}
\end{align}
In this formulation, the constraints state that each $x_{i}$ is the projection of $X$ in image $i$.  Let us now denote the feasible region for this optimization problem by 
\begin{align}
V_{P} = \left\{ x \in \reals^{2n}\ |\ \exists X \in \reals^{3} \text{ s.t. } x_{i} = \Pi P_{i} \tilde{X},\  \forall i = 1,\ldots,n\right\}.
\end{align}
For any $x \in V_{P}$, we can recover the corresponding $X$ using the SVD based algorithm~\cite{hartley-zisserman-2003}. Now we can re-state~\eqref{eq:rational-constrained} purely in terms of $x$
\begin{align}
\arg\min_{x}\   \| x - \hat{x}\|^{2},\quad
\text{s.t.}  \  x \in V_{P}.
\label{eq:abstract}
\end{align}

Let $\overline{V}_{P} \supseteq V_{P}$ be the closure of $V_{P}$, meaning that $\overline{V}_{P}$ contains all the limit points of $V_{P}$.  Then consider the following optimization problem:
\begin{align}
\arg\min_{x}\   \| x - \hat{x}\|^{2},\quad
\text{s.t.}  \  x \in \overline{V}_{P}.
\label{eq:abstract-closure}
\end{align}

The objective function in these two optimization problems is the squared distance from $\widehat{x}$ to the sets $V_{P}$ and $\overline{V}_{P}$ respectively. Since $\overline{V}_{P}$ is the topological closure of $V_{P}$ it can be shown that any solution $x^{*}$ to~\eqref{eq:abstract-closure} which is not in $V_{P}$ 
is arbitrarily close to a point in $V_{P}$, and the optimal objective function values for~\eqref{eq:abstract} and~\eqref{eq:abstract-closure} are the same. Thus, solving~\eqref{eq:abstract-closure} is essentially equivalent to solving~\eqref{eq:abstract}.

The set $V_{P}$ is a quasi-projective variety. A {\em variety} is the zero set of a finite set of polynomials, and a {\em quasi-projective variety} is the set difference of two varieties. Therefore, 
$\overline{V}_{P}$ is also a variety~\cite{shafarevich-1988}. Heyden \& {\AA}str{\"o}m~\cite{heyden-astrom-1997} show that
\begin{align}
\overline{V}_{P} = \left\{ x \in \reals^{2n}\   
\begin{array}{|l}   
\ f_{ij}(x) =0,\ 1\leq i < j \leq n\\
\ t_{i,j,k}(x) = 0,\ 1 \leq i < j < k \leq n 
\end{array}
\right \}.
\end{align}
Here $f_{ij}(x) =  \tilde{x}_{i}^{\top} F_{ij} \tilde{x}_{j} = 0$ are the bilinear/quadratic epipolar constraints, where $F_{ij}\in \reals^{3 \times 3}$ is the fundamental matrix for images $i$ and $j$.    The second set of constraints $t_{ijk}(x_{i},x_{j},x_{k}) = 0$ are the  the trilinear/cubic constraints defined by the trifocal tensor on images $i,j$ and $k$.

At the risk of a mild abuse of notation, we will also use $F_{ij}$ to denote a $(2n+1) \times (2n+1)$ matrix such that $f_{ij}(x) = \tilde{x}^{\top} F_{ij} \tilde{x}$. The construction of this matrix involves embedding two copies of the $3 \times 3$ fundamental matrix  (with suitable reordering of the entries) in an all zero $(2n+1) \times (2n+1)$ matrix.

Now, let $W_{P}$ be the quadratic variety
\begin{align}
\label{eq:quadratic-variety}
W_{P} = \left\{ x \in \reals^{2n}\ | \right . & \left . \tilde{x}^{\top} F_{ij}\tilde{x} = 0,\  1 \leq i < j \leq n \right\}.
\end{align}

For $n=2$, since there are no trilinear constraints $W_{P} = \overline{V}_{P}$ but for $n \geq 3$, in general $W_{P} \supseteq \overline{V}_{P}$.   For $n \ge 4$, Heyden \& {\AA}str{\"o}m show that if the camera centers are not co-planar then $W_{P} = \overline{V}_{P}$~\cite{heyden-astrom-1997}.  Note that for $n=3$, the camera centers are always co-planar.
Therefore, for $n=2$,  and for $n\geq 4$ when the camera centers are non-co-planar, we can just optimize over the quadratic variety $W_{P}$:
\begin{align}
\arg\min_{x} \  \| x - \hat{x}\|^{2}\quad
\text{s.t.}  \ x \in W_{P}.
\label{eq:abstract2}
\end{align}

However, we cannot just ignore the co-planar case as a degeneracy since  it is a common enough occurrence, e.g., an object rotating on a turntable in front of a fixed camera. If all the camera centers lie on a plane $\pi_{P}$, then solving~\eqref{eq:abstract2} instead of~\eqref{eq:abstract-closure} can result in spurious solutions, i.e. a projection vector $x^{*}$ for which there is no single point $X^{*} \in \mathbb{R}^{3}$ that projects to $x^{*}_{i}$ for each image $i$. This can happen if each $x^{*}_{i}$ lies on the image of the plane $\pi_{P}$ in image $i$. It is easy to reject such a spurious $x^{*}$ by checking if it satisfies the trilinear constraints.
%This is an easy and efficient check. One way is to check if $x^{*}$ satisfies the trilinear constraints $t_{ijk}(x^{*}) = 0$ defined by the trifocal tensors. If it does, then $x^{*} \in \overline{V}_{P}$ and  we have solved~\eqref{eq:abstract-closure} optimally, otherwise the algorithm has failed.

From here on, we will focus our attention on solving~\eqref{eq:abstract2} and in Section~\ref{sec:experiments} we will show that solving~\eqref{eq:abstract2} instead of~\eqref{eq:abstract-closure} is not a significant source of failures.

Let us now define the polynomial 
\begin{align}
g(x) &= \| x - \widehat{x} \|^{2} %= x^{\top}x-2\widehat x^{\top}x + \widehat x^{\top}\widehat x\\
= \tilde{x}^{\top}
\begin{bmatrix} I  & -\widehat x\\
-\widehat x^{\top} & \|\widehat x\|^2\end{bmatrix}
\tilde{x}%\\
=  \tilde{x}^{\top}G\tilde{x},
\end{align}
where $I$ is the $2n\times 2n$ identity matrix. Observe that $G \in \mathcal{S}^{2n+1}_{+}$, and the Hessian of $g$ is $\nabla^2g=2I$. Similarly, let  $F_{ij} = \begin{bmatrix} H_{ij} & b_{ij}\\ b_{ij}^{\top} & \beta_{ij}\end{bmatrix}$,
%\end{equation}
where $H_{ij}\in\mathcal{S}^{2n}$, $b_{ij}\in\mathbb{R}^{2n}$, and $\beta_{ij}\in\mathbb{R}$.  Then $\nabla^{2}f_{ij}=2H_{ij}$. We re-write~\eqref{eq:abstract2} as the quadratically constrained quadratic program (QCQP)
\begin{equation}
\label{eq:matrix-polynomial}
\begin{array}{rl}
\displaystyle\arg\min_{x\in\reals^{2n}}& \tilde{x}^{\top}G\tilde{x}\quad
\text{s.t.} \quad \tilde{x}^{\top}F_{ij}\tilde{x} = 0,\   1 \leq i < j \leq n.
\end{array}
\end{equation}

%!TEX root = paper.tex

\section{Semidefinite Relaxation}
\label{sec:semidefinite}
%In this section we will present an algorithm for solving~\eqref{eq:matrix-polynomial} based on semidefinite programming (SDP) relaxations.

%\subsection{Semidefinite Relaxation}
We re-write~\eqref{eq:matrix-polynomial} as the following rank constrained semidefinite program (SDP).
\begin{equation}
\label{triangulation with matrices}
\begin{array}{rl}
\displaystyle\arg\min_{Y}& \langle G,Y\rangle \\
\text{s.t.} & \langle F_{ij}, Y\rangle=0,\ 1\le i<j\le n,\\
& \langle E, Y\rangle = 1,\\
& Y\in\mathcal{S}^{2n+1}_{+},\\
& {\rm rank}(Y)=1.
\end{array}
\end{equation}
Here $E \in \mathcal{S}^{2n+1}$ is an all zero matrix except for its bottom right entry which equals one.
The problems~\eqref{eq:matrix-polynomial} and \eqref{triangulation with matrices} are equivalent: $x$ is feasible (optimal) for \eqref{eq:matrix-polynomial} if and only if $Y=\tilde{x}\tilde{x}^{\top}$ is feasible (optimal) for~\eqref{triangulation with matrices}.

%since a feasible $x$ for~\eqref{eq:matrix-polynomial} yields a feasible $Y=\tilde{x}\tilde{x}^{\top}$ for~\eqref{triangulation with matrices} with $\langle G,Y\rangle = \tilde{x}^{\top}G\tilde{x}$.  Conversely, every feasible $Y$ is of this form.
%In particular, given an optimal solution $x^{*}$ of~\eqref{eq:matrix-polynomial}, $Y^{*} = \tilde{x}^{*}\tilde{x}^{*\top}$ is an optimal solution for~\eqref{triangulation with matrices} and vice versa.
  
%The equivalence between~\eqref{eq:matrix-polynomial} and~\eqref{triangulation with matrices} can be seen by setting $Y = \tilde{x}\tilde{x}^{\top}$.  In particular, given an optimal solution $x^{*}$ of~\eqref{eq:matrix-polynomial}  then $Y^{*} = \tilde{x}^{*}\tilde{x}^{*\top}$ is the optimal solution for~\eqref{triangulation with matrices} and vice versa.

Solving rank constrained semidefinite programs is NP-hard~\cite{vandenberghe-boyd-1996}. Dropping the 
rank constraint gives the primal semidefinite program
%Dropping the rank constraint gives the primal semidefinite program
\begin{equation}
\label{primal sdp}
%\tag{$\mathcal{Q}^{\rm mom}$}
\begin{array}{rl}
\displaystyle\arg\min_{Y}& \langle G,Y\rangle \\
\text{s.t.} & \langle F_{ij}, Y\rangle=0,\ 1\le i<j\le n,\\
& \langle E, Y\rangle = 1,\\
& Y\in\mathcal{S}^{2n+1}_{+}.
\end{array}
\end{equation}
The dual of this primal semidefinite program is
\begin{equation}
\label{dual sdp}
%\tag{$\mathcal{Q}^{\rm sos}$}
\begin{array}{rl}
\underset{\lambda_{ij},\rho}{\arg\max} & \rho \\
\text{subject to } & G + \sum \lambda_{ij} F_{ij} -\rho E\succeq 0,\\
& \lambda_{ij}, \rho\in\mathbb{R},\ 1\le i<j\le n.
\end{array}
\end{equation}

The primal SDP~\eqref{primal sdp} is also known as the first {\em moment} relaxation
and its dual SDP~\eqref{dual sdp} is known as the first {\em sum of squares} relaxation.
They are instances of a general hierarchy of semidefinite relaxations for polynomial optimization problems~\cite{laurent-2009}. Problem \eqref{dual sdp} is also the Lagrangian dual of~\eqref{eq:matrix-polynomial}.

The remainder of this paper is dedicated to the possibility that solving the triangulation problem is equivalent to solving these semidefinite relaxations. Let us denote by $\gtri,\gtwo,\gmom,\gsos$, the optimal solutions to the optimization problems~\eqref{eq:abstract-closure},~\eqref{eq:matrix-polynomial},~\eqref{primal sdp},~\eqref{dual sdp} respectively. Then the following lemmas hold.

\begin{lemma}
\label{lem:tri-quad-inequality}
For all $n$, $\gtri \geq \gtwo$. For $n=2$, or $n\geq4$ with non-co-planar cameras, $\gtri = \gtwo$.
\end{lemma}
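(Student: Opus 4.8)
The plan is to establish the inequality $\gtri \geq \gtwo$ by exhibiting a feasible point for~\eqref{eq:matrix-polynomial} whose objective value equals $\gtri$, and then to upgrade it to an equality under the stated hypotheses via the variety identity $W_P = \overline{V}_P$. First I would recall that~\eqref{eq:matrix-polynomial} is by construction the matrix reformulation of~\eqref{eq:abstract2}, i.e. it is the problem of minimizing $\|x-\widehat x\|^2$ over $W_P$, while $\gtri$ is the optimum of~\eqref{eq:abstract-closure}, the same objective minimized over $\overline{V}_P$. Since $\overline{V}_P \subseteq W_P$ always (as noted in the paragraph preceding~\eqref{eq:abstract2}), minimizing over the larger set $W_P$ can only decrease the optimal value, which gives $\gtwo \leq \gtri$ for all $n$. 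I would need to be slightly careful to note that the minimum over $W_P$ is actually attained (so $\gtwo$ is well-defined as an $\arg\min$): $W_P$ is a real variety, hence closed, and the sublevel sets of $\|x - \widehat x\|^2$ are compact, so a minimizer exists; alternatively one can argue attainment on $\overline{V}_P$ and pass down.

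For the equality statement, the key step is to invoke the result of Heyden \& {\AA}str{\"o}m quoted in Section~\ref{sec:quadratic}: for $n=2$ there are no trilinear constraints so $W_P = \overline{V}_P$ by definition, and for $n \geq 4$ with non-co-planar camera centers the same equality $W_P = \overline{V}_P$ holds. Once the two feasible sets coincide, the two optimization problems~\eqref{eq:abstract-closure} and~\eqref{eq:abstract2} are literally identical, hence $\gtri = \gtwo$. The only remaining bookkeeping is to confirm that~\eqref{eq:matrix-polynomial} and~\eqref{eq:abstract2} have the same optimal value — but this is immediate from the algebraic identities $g(x) = \tilde x^\top G \tilde x$ and $f_{ij}(x) = \tilde x^\top F_{ij} \tilde x$ established just above~\eqref{eq:matrix-polynomial}, which show the two problems have the same feasible set and the same objective.

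I do not expect a genuine obstacle here; the lemma is essentially a bookkeeping consequence of two facts that have already been set up in the text, namely the containment $\overline{V}_P \subseteq W_P$ and the Heyden--{\AA}str{\"o}m equality in the stated cases. The one point that deserves explicit care is the claim that a minimizer exists for each problem, so that writing $\gtri$ and $\gtwo$ as specific optimal values (rather than infima) is justified; I would dispatch this with the compactness-of-sublevel-sets argument mentioned above, using that $G \succeq 0$ and that both $W_P$ and $\overline{V}_P$ are closed. With that in hand the proof is a two-line argument.
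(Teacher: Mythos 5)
Your proposal is correct and follows exactly the route the paper intends: the paper's own proof is just a pointer to the discussion after the definition of $W_P$, namely the containment $\overline{V}_P \subseteq W_P$ (giving $\gtwo \le \gtri$) and the Heyden--{\AA}str{\"o}m equality $W_P = \overline{V}_P$ for $n=2$ and for $n\ge 4$ non-co-planar (giving equality). Your additional care about attainment of the minima is a reasonable bit of bookkeeping the paper leaves implicit.
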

\begin{proof}
The claim follows from the discussion in Section~\ref{sec:quadratic} after the definition of $W_{P}$. $\qed$
\end{proof}
\begin{lemma}
\label{lem:mom-relax-inequality}$\gtwo \geq \gmom$.
\end{lemma}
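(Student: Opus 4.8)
The plan is to exhibit a feasibility-preserving, objective-preserving map from the feasible set of the QCQP~\eqref{eq:matrix-polynomial} into the feasible set of the moment relaxation~\eqref{primal sdp}. Concretely, given any feasible $x \in \reals^{2n}$ for~\eqref{eq:matrix-polynomial}, I would set $Y = \tilde{x}\tilde{x}^{\top} \in \mathcal{S}^{2n+1}_{+}$. This $Y$ is positive semidefinite (it is a rank-one outer product), and it satisfies the linear constraints of~\eqref{primal sdp}: for each pair $i<j$ we have $\langle F_{ij}, Y\rangle = \tilde{x}^{\top}F_{ij}\tilde{x} = 0$ by feasibility of $x$, and $\langle E, Y\rangle = (\tilde{x}\tilde{x}^{\top})_{2n+1,2n+1} = 1$ since the last coordinate of $\tilde{x}$ is $1$. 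Hence $Y$ is feasible for~\eqref{primal sdp}, and its objective value is $\langle G, Y\rangle = \tilde{x}^{\top}G\tilde{x} = g(x)$, exactly the objective of~\eqref{eq:matrix-polynomial} at $x$.

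From here the argument is immediate: the feasible set of~\eqref{primal sdp} contains $\{\tilde{x}\tilde{x}^{\top} : x \text{ feasible for } \eqref{eq:matrix-polynomial}\}$ as a subset — indeed~\eqref{primal sdp} is precisely~\eqref{triangulation with matrices} with the rank constraint dropped, and the equivalence of~\eqref{eq:matrix-polynomial} and~\eqref{triangulation with matrices} was already noted in the excerpt. Minimizing over a larger feasible set can only decrease (or keep equal) the optimal value, so $\gmom \le \gtwo$, which is the claim. I would phrase this as: any optimizer $x^{*}$ of~\eqref{eq:matrix-polynomial} yields a feasible point $\tilde{x}^{*}(\tilde{x}^{*})^{\top}$ of~\eqref{primal sdp} with the same objective value $\gtwo$, whence $\gmom \le \gtwo$.

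There is essentially no obstacle here — the lemma is the standard "relaxation decreases the optimum" observation, and the only thing to be slightly careful about is the existence of an optimizer for~\eqref{eq:matrix-polynomial}; if one prefers to avoid assuming attainment, the same inequality follows by taking a feasible sequence $x^{(k)}$ with $g(x^{(k)}) \to \gtwo$ and noting each $\tilde{x}^{(k)}(\tilde{x}^{(k)})^{\top}$ is feasible for~\eqref{primal sdp}, so $\gmom \le g(x^{(k)})$ for all $k$, hence $\gmom \le \gtwo$. Either way the proof is a couple of lines.
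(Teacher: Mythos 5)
Your proof is correct and is exactly the paper's argument, spelled out in detail: the paper simply states that \eqref{primal sdp} is a relaxation of \eqref{eq:matrix-polynomial}, and your lifting $x \mapsto \tilde{x}\tilde{x}^{\top}$ (together with the sequence argument to avoid assuming attainment) is the standard verification of that claim.
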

\begin{proof}
This is true because~\eqref{primal sdp} is a relaxation of~\eqref{eq:matrix-polynomial}.
% obtained by dropping the rank constraint. 
$\qed$
\end{proof}

\begin{lemma}
\label{lem:zero-duality}
For all $n$, $\gmom = \gsos$.  Moreover, there exist optimal $Y^{*},\lambda^{*}_{ij}$ and $\rho^{*}$ that achieve these values.
\end{lemma}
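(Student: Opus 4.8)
The plan is to derive this from the standard strong-duality theorem for semidefinite programs: if both the primal SDP~\eqref{primal sdp} and its dual~\eqref{dual sdp} are strictly feasible, then there is no duality gap (so $\gmom = \gsos$) and \emph{both} problems attain their optima~\cite{vandenberghe-boyd-1996}. So the entire task reduces to exhibiting a strictly feasible point for each program.

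For the dual, I would take $\lambda_{ij}=0$ for all $i<j$ and $\rho=-1$. Then the dual slack matrix is $G+E = \begin{bmatrix} I_{2n} & -\widehat{x}\\ -\widehat{x}^{\top} & \|\widehat{x}\|^{2}+1\end{bmatrix}$, whose Schur complement with respect to the leading block $I_{2n}$ equals $\|\widehat{x}\|^{2}+1-\widehat{x}^{\top}\widehat{x}=1>0$; hence $G+E\succ 0$ and the dual is strictly feasible.

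For the primal, I would start from a feasible rank-one point and perturb it into the interior of the PSD cone. Since $V_{P}\neq\emptyset$, pick $x\in V_{P}\subseteq W_{P}$, so that $Y_{0}=\tilde{x}\tilde{x}^{\top}$ satisfies $\langle F_{ij},Y_{0}\rangle=\tilde{x}^{\top}F_{ij}\tilde{x}=0$, $\langle E,Y_{0}\rangle=1$ and $Y_{0}\succeq 0$. Now set $Y_{\epsilon}=Y_{0}+\epsilon\,\mathrm{diag}(I_{2n},0)$ with $\epsilon>0$. The crucial structural fact is that each $F_{ij}$ has an all-zero diagonal in its leading $2n\times 2n$ block $H_{ij}$ — the quadratic and linear parts of the epipolar form occupy only off-diagonal blocks, while the constant $\beta_{ij}$ sits in the bottom-right corner — so $\langle F_{ij},\mathrm{diag}(I_{2n},0)\rangle=\mathrm{tr}(H_{ij})=0$; likewise $\langle E,\mathrm{diag}(I_{2n},0)\rangle=0$. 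Hence $Y_{\epsilon}$ still satisfies every linear constraint, and (writing $Y_{0}$ in the block form $\begin{bmatrix} xx^{\top} & x\\ x^{\top} & 1\end{bmatrix}$) its Schur complement with respect to the bottom-right entry $1$ is $(xx^{\top}+\epsilon I_{2n})-xx^{\top}=\epsilon I_{2n}\succ 0$, so $Y_{\epsilon}\succ 0$. Thus the primal is strictly feasible for every $n$.

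Finally I would note that weak duality — for primal-feasible $Y$ and dual-feasible $(\lambda_{ij},\rho)$ one has $\langle G,Y\rangle-\rho=\langle G+\sum\lambda_{ij}F_{ij}-\rho E,\,Y\rangle\ge 0$ — together with $\langle G,Y\rangle\ge 0$ keeps the common value finite, so the duality theorem applies verbatim and delivers $\gmom=\gsos$ as well as optimal $Y^{*}$ and $(\lambda^{*}_{ij},\rho^{*})$. I expect the only nonobvious step is choosing the perturbation direction $\mathrm{diag}(I_{2n},0)$ for primal strict feasibility and checking, via the sparsity pattern of the $F_{ij}$, that it is orthogonal to all the equality constraints; everything else is routine Schur-complement bookkeeping.
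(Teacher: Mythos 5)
Your proof is correct and follows essentially the same route as the paper: verify Slater's condition for both SDPs (dual via $\lambda_{ij}=0$, $\rho=-1$, $G+E\succ 0$; primal via $\tilde{x}\tilde{x}^{\top}$ plus a positive multiple of $\mathrm{diag}(I_{2n},0)$) and invoke the strong-duality theorem of Vandenberghe--Boyd. You actually supply a detail the paper leaves as ``easy to show,'' namely that $\langle F_{ij},\mathrm{diag}(I_{2n},0)\rangle=\mathrm{tr}(H_{ij})=0$ because the epipolar forms are bilinear across distinct images and hence have no diagonal quadratic terms.
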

\begin{proof}
The inequality $\gmom \geq \gsos$ follows from weak duality. Equality, and the existence of $Y^{*},\lambda^{*}_{ij}$ and $\rho^{*}$ which attain the optimal values follow if we can show that the feasible regions of both the primal and dual problems have nonempty interiors \cite[Theorem 3.1]{vandenberghe-boyd-1996} (also known as Slater's constraint qualification). 

For the primal problem, let $x\in\mathbb{R}^{2n}$ be any feasible
point for the triangulation problem~\eqref{eq:abstract2} (such a feasible point always exists) and let $D={\rm diag}(1,\ldots,1,0)\in\mathcal{S}^{2n+1}$. It is easy to show that 
$Y= \tilde{x}\tilde{x}^{\top} + D$ is positive definite and primal feasible. 
%In more detail, first $Y$ is primal feasible because $\tilde{x}\tilde{x}^{\top}$ is primal feasible and $\langle E,D\rangle = \langle F_{ij},D\rangle=0$ for all $1\le i<j\le n$ --- recall here that the diagonal of $F_{ij}$ is zero except possibly in the bottom right entry.  To show that $Y$ is strictly positive definite it suffices to show that $Y$ is nonsingular. This is true because $D$ has rank equal to $2n$, and $\tilde{x}$ has a nonzero component in the nullspace of $D$.
For the dual problem, take $\lambda_{ij}=0$ and $\rho=-1$ and verify $G+E\succ 0$. \qed
\end{proof}

%!TEX root = paper.tex

\section{The Algorithm and its Analysis}
\label{sec:analysis}

We propose Algorithm~\ref{alg:triangulation} as a method for triangulation.
\begin{algorithm}[h!]
\caption{Triangulation}
\label{alg:triangulation}
\begin{algorithmic}[1]
\REQUIRE Image observation vector $\widehat{x} \in\reals^{2n}$ and the set of cameras $P=\{P_{1},\ldots,P_{n}\}$.
\STATE Solve the primal~\eqref{primal sdp} and dual~\eqref{dual sdp} SDPs to optimal $Y^{*}, \lambda_{ij}^{*}$ and  $\rho^{*}$.
 \STATE $x = Y^{*}[1:2n,2n+1]$ (i.e., $x$ is the last column of $Y^{*}$ without its last entry)
\STATE Use the SVD algorithm to determine a world point $X\in\mathbb{R}^3$ from $x$.\label{alg-line:extract}
\IF{$I+\sum\lambda_{ij}^{*}H_{ij} \succ 0$} \label{alg-line:definite}
\IF{$n=2$,  or $n \geq 4$ and the cameras $P$ are non-co-planar,}
\STATE Return ({\sf OPTIMAL}, $X$).
\ENDIF
\IF {$x_{i}^{} = \Pi P_{i}\tilde{X}^{}\  \forall i=1,\ldots,n$}
\STATE Return ({\sf OPTIMAL}, $X^{}$)
\ENDIF
\ENDIF
\STATE Return ({\sf SUBOPTIMAL}, $X^{}$).
\end{algorithmic}
\end{algorithm}

\subsection{Correctness}
%We are now ready to analyze the performance and correctness of Algorithm~\ref{alg:triangulation}.
%We begin by proving its correctness.

\begin{theorem}
Algorithm~\ref{alg:triangulation} terminates in time polynomial in $n$.
\end{theorem}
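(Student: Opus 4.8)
The plan is to walk through Algorithm~\ref{alg:triangulation} line by line, bound the cost of each step by a fixed polynomial in $n$, and add them up. All the data in the problem consists of $O(n^2)$ matrices of size $(2n+1)\times(2n+1)$: there are $\binom{n}{2}$ fundamental matrices $F_{ij}$, each obtained from the camera pair $(P_i,P_j)$ in $O(1)$ arithmetic and then embedded into a $(2n+1)\times(2n+1)$ array in $O(n^2)$ operations, and $G$ is assembled from $\widehat x$ in $O(n^2)$; so setting up the SDPs costs $O(n^4)$. Reading off $x = Y^{*}[1{:}2n,\,2n+1]$ in Line~2 is $O(n)$, and recovering $X\in\reals^3$ from $x$ in Line~3 via the DLT/SVD algorithm of~\cite{hartley-zisserman-2003} is an SVD of a matrix with $O(n)$ rows and a constant number of columns, hence $O(n)$ work.

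The substantive step is Line~1, the solution of the primal~\eqref{primal sdp} and dual~\eqref{dual sdp} SDPs. Here $Y$ ranges over $\mathcal{S}^{2n+1}$ (which has $O(n^2)$ free entries), there are $\binom{n}{2}+1 = O(n^2)$ linear equality constraints, and a single positive semidefinite cone $\mathcal{S}^{2n+1}_+$; the dual has $O(n^2)$ variables and one semidefinite constraint of the same order. By Lemma~\ref{lem:zero-duality} both the primal and the dual satisfy Slater's constraint qualification and attain their optima with zero duality gap. Consequently a primal--dual interior-point method (equivalently, the ellipsoid method) computes primal and dual optimal solutions to any fixed target accuracy using a number of arithmetic operations that is polynomial in $n$ (and in $\log(1/\epsilon)$ for accuracy $\epsilon$); see~\cite[Theorem~3.1 and \S7]{vandenberghe-boyd-1996}. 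Since the statement concerns running time for a fixed desired accuracy, this is exactly what is needed.

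It remains to bound the certificate tests in Lines~4--12. Forming $I+\sum\lambda^{*}_{ij}H_{ij}$ sums $O(n^2)$ matrices of size $2n\times 2n$, so $O(n^4)$, and checking $\succ 0$ is a Cholesky factorization in $O(n^3)$. Deciding whether $n=2$ is trivial; testing co-planarity of the $n$ camera centers amounts to computing each center as the one-dimensional null space of $P_i$ (constant work each) and then checking whether the $n\times 4$ matrix of their homogeneous coordinates has rank at most $3$, a rank computation in $O(n)$ operations. Verifying $x_i = \Pi P_i\tilde X$ for all $i$ is $n$ comparisons of $O(1)$ each. Summing over all steps, the running time is dominated by the $O(n^4)$ assembly and the SDP solve, and is therefore polynomial in $n$.

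The only point that is not entirely routine is the precise sense in which Line~1 ``runs in polynomial time'': an SDP is solved only to finite precision, and the polynomial iteration bound for interior-point methods is guaranteed here precisely because Slater's condition holds on both sides, which is the content of Lemma~\ref{lem:zero-duality}. Everything else is elementary dense linear algebra on $O(n)$-dimensional matrices, of which there are $O(n^2)$, so no further care is required.
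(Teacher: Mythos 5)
Your proof is correct and follows essentially the same route as the paper's: polynomial-size descriptions of the primal and dual SDPs, polynomial-time solvability of SDPs to fixed accuracy (where your explicit appeal to Slater's condition via Lemma~\ref{lem:zero-duality} is a worthwhile refinement), and polynomial-time linear algebra for the certificate checks. The only cosmetic difference is that you use a Cholesky factorization where the paper invokes an eigenvalue decomposition for the definiteness test; both suffice.
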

\begin{proof}
The proof is based on three facts. One, the primal~\eqref{primal sdp} and dual~\eqref{dual sdp}  have descriptions of size polynomial in $n$, the number of images.
%This can be seen by observing that $Y$ is of size $(2n +1) \times (2n+1)$ and there are ${n\choose 2} + 1$ linear equality constraints.
Two, SDPs can be solved to arbitrary precision in time which is polynomial in the size of their descriptions~\cite{vandenberghe-boyd-1996}. Three, the eigenvalue decomposition of a matrix can be computed in polynomial time.
%These three facts imply that Algorithm~\ref{alg:triangulation} terminates in time polynomial in $n$
\qed
\end{proof}

%Let us now consider the correctness of Algorithm~\ref{alg:triangulation}, but before that we need the following definition.

Before moving forward, we need the following definition.
\begin{definition}
A triangulation problem is \sdpexact{} if $\gtwo=\gsos=\gmom$, i.e., the relaxations are tight.
\end{definition}

We will first describe the conditions under which a triangulation problem is \sdpexact. We will then show that if Algorithm~\ref{alg:triangulation} returns {\sf OPTIMAL} then triangulation is \sdpexact, and further, the solution $X$ returned by the algorithm is indeed optimal for triangulation.

\begin{theorem}
\label{thm:sdp-exactness}
Let $x^{*}$ be an optimal solution to the quadratic program~\eqref{eq:matrix-polynomial}.  The triangulation problem is \sdpexact{} if and only if there exist $\lambda_{ij}\in\mathbb{R}$  such that
\begin{equation}
{\rm(i)}\ \nabla g(x^{*}) + \sum\lambda_{ij}\nabla f_{ij}(x^{*}) = 0
%\newtext{\nabla_{x} L(x^{*},\lambda_{ij})=0}
\qquad\text{and}\qquad {\rm(ii)}\ I+\sum\lambda_{ij}H_{ij}\succeq 0.
%\newtext{\frac{1}{2}\nabla_{x}^{2} L(\lambda_{ij})=I+\sum\lambda_{ij}H_{ij}\succeq 0.}
\end{equation}
\end{theorem}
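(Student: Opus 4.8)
The plan is to prove this as a standard exactness criterion for the first-level Lasserre/SOS relaxation, exploiting the fact that the objective and constraints are quadratic so that gradients are linear and the SOS relaxation is exactly the Lagrangian dual. I would split the argument into the two implications.

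\textbf{($\Leftarrow$) Suppose (i) and (ii) hold.} Since $g$ and the $f_{ij}$ are quadratic, (i) says that the gradient of the Lagrangian $L(x) = g(x) + \sum\lambda_{ij}f_{ij}(x)$ vanishes at $x^*$, and since $\nabla^2 L = 2(I + \sum\lambda_{ij}H_{ij}) \succeq 0$ by (ii), $L$ is convex and $x^*$ is a global minimizer of $L$ over all of $\reals^{2n}$. Concretely, writing $M = G + \sum\lambda_{ij}F_{ij}$, conditions (i) and (ii) together with the fact that $x^*$ is feasible (so $\tilde{x}^{*\top}F_{ij}\tilde{x}^* = 0$, whence $\tilde{x}^{*\top}M\tilde{x}^* = g(x^*) = \gtwo$) show that $M - \gtwo E \succeq 0$: its Schur complement / quadratic form is exactly the Hessian condition plus the stationarity condition guaranteeing the minimum of the quadratic $\tilde{x}^\top M \tilde{x}$ over the affine slice $\{\tilde{x}: \text{last coord}=1\}$ equals $\gtwo$. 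Hence $(\lambda_{ij}, \gtwo)$ is dual feasible with value $\gtwo$, so $\gsos \geq \gtwo$; combined with Lemmas~\ref{lem:mom-relax-inequality} and~\ref{lem:zero-duality} ($\gtwo \geq \gmom = \gsos$) this forces $\gtwo = \gmom = \gsos$, i.e. the problem is \sdpexact.

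\textbf{($\Rightarrow$) Suppose the problem is \sdpexact.} By Lemma~\ref{lem:zero-duality} there exist optimal dual variables $\lambda_{ij}^*, \rho^*$ with $\rho^* = \gsos = \gtwo$ and $M^* := G + \sum\lambda_{ij}^*F_{ij} - \gtwo E \succeq 0$. I claim these $\lambda_{ij}^*$ satisfy (i) and (ii). Condition (ii) is immediate: the top-left $2n\times 2n$ block of $M^*$ is $I + \sum\lambda_{ij}^*H_{ij}$, a principal submatrix of a PSD matrix, hence PSD. For (i): $x^*$ optimal for~\eqref{eq:matrix-polynomial} means $\tilde{x}^{*\top}G\tilde{x}^* = \gtwo$ and $\tilde{x}^{*\top}F_{ij}\tilde{x}^* = 0$, so $\tilde{x}^{*\top}M^*\tilde{x}^* = \gtwo - \gtwo = 0$; since $M^* \succeq 0$ this gives $M^*\tilde{x}^* = 0$. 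Reading off the first $2n$ coordinates of $M^*\tilde{x}^* = 0$ and using that $\nabla g(x^*) + \sum\lambda_{ij}^*\nabla f_{ij}(x^*)$ equals (twice) those coordinates (because for a quadratic $q(x) = \tilde{x}^\top Q \tilde{x}$ one has $\tfrac12\nabla q(x) = Q[1{:}2n,\,{:}]\,\tilde{x}$), we get exactly (i).

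The main obstacle — really the only thing requiring care — is the linear-algebra translation in the $(\Leftarrow)$ direction: showing that (i) and (ii) imply $G + \sum\lambda_{ij}F_{ij} - \gtwo E \succeq 0$. This is a Schur-complement-style computation: one must verify that a symmetric matrix $M$ whose trailing principal block's quadratic form is minimized (over the affine slice with last coordinate $1$) at a point where the gradient vanishes, and whose leading block is PSD, is itself PSD. I would do this by completing the square: for any $\tilde{x} = (y; 1)$, $\tilde{x}^\top M\tilde{x} = (y - x^*)^\top (I + \sum\lambda_{ij}H_{ij})(y - x^*) \geq 0$ using (i) to kill the cross terms and $\tilde{x}^{*\top}M\tilde{x}^* = 0$ for the constant term, then extend from $\tilde{x}$ with nonzero last coordinate to all of $\reals^{2n+1}$ by homogeneity/continuity. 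The remaining steps are bookkeeping with the earlier lemmas.
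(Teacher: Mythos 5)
Your proposal is correct and follows essentially the same route as the paper: the $(\Leftarrow)$ direction is exactly the paper's combination of its completing-the-square lemma (Lemma~\ref{lem:lagrange-sos}) and the homogenization lemma (Lemma~\ref{lem:homogeneous}) to exhibit $(\lambda_{ij},\gtwo)$ as a dual-feasible point of value $\gtwo$. In the $(\Rightarrow)$ direction you phrase the argument via $M^*\succeq 0$, $\tilde{x}^{*\top}M^*\tilde{x}^*=0\Rightarrow M^*\tilde{x}^*=0$ and read off the first $2n$ rows, whereas the paper says the nonnegative quadratic $L(\cdot,\lambda^*_{ij},g(x^*))$ attains its minimum $0$ at $x^*$ so its gradient vanishes and its Hessian is PSD --- these are the same fact in matrix versus calculus clothing, so no substantive difference.
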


Before proving this theorem, we observe that it is not immediately useful from a computational perspective.  Indeed, {\em a priori} verifying condition (i) requires knowledge of the optimal solution $x^{*}$. However, the theorem will help us understand {\em why} the triangulation problem is so often \sdpexact{} in Section~\ref{sec:geometry of algorithm}.

%%% BEGIN NEW
%\newtext{
%These necessary and sufficient conditions provide tools for analyzing {\em why} the triangulation problem is so often \sdpexact{} in Section~\ref{sec:geometry of algorithm}. However, the theorem is not immediately useful from a computational perspective.  Indeed, {\em a priori} verifying condition (i) requires knowledge of the optimal solution $x^{*}$. Moreover, we've already seen that condition (ii) is automatically satisfied at the dual optimum $\lambda_{ij}^{*}$. By slightly tightening condition (ii) we will arrive at an efficiently checkable necessary condition for triangulation to be \sdpexact{}, more precisely stated in Theorem~\ref{thm:extract solution}. First we prove Theorem~\ref{thm:sdp-exactness}:
%}
%%% END NEW 

Let 
%\begin{align}
$L(x,\lambda_{ij},\rho)  = g(x) + \sum_{ij}\lambda_{ij} f_{ij}(x) - \rho = \tilde{x}^{\top}\left(G+\sum\lambda_{ij}F_{ij} - \rho E\right)\tilde{x}$.
%\label{eq:sos-decomposition}
%\end{align}
Observe that $\nabla_{x}L(x,\lambda_{ij},\rho) = \nabla g(x) + \sum\lambda_{ij}\nabla f_{ij}(x)$ and $\nabla^{2}_{x}L(x,\lambda_{ij},\rho) = 2(I + \sum \lambda_{ij} H_{ij})$. We require the following two simple lemmas, the proofs of which can be found in the Appendix.
%extended version of this paper~\cite{agarwal-website}.
%can be found in the supplementary material.
\begin{lemma}
If $x^{*}$ is the optimal solution to~\eqref{eq:matrix-polynomial} and  $\lambda_{ij}$ satisfy condition (i), then 
$L(x,\lambda_{ij}, g(x^{*})) = \left(x-x^{*}\right)^{\top}\left(I+\sum\lambda_{ij}H_{ij}\right)\left(x-x^{*}\right)$. Further, if condition (ii) is satisfied as well, then $L(x,\lambda_{ij}, g(x^{*})) \ge 0,\ \forall x \in \mathbb{R}^{2n}$.
\label{lem:lagrange-sos}
\end{lemma}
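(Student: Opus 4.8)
The plan is to expand $L(x,\lambda_{ij},g(x^*))$ as a quadratic in $x$ using the Taylor expansion around $x^*$, and then use the two hypotheses to collapse it. Since $L(x,\lambda_{ij},\rho)$ is a quadratic polynomial in $x$ whose Hessian (with respect to $x$) is the constant matrix $2(I+\sum\lambda_{ij}H_{ij})$, its exact second-order Taylor expansion about $x^*$ is
\begin{align*}
L(x,\lambda_{ij},\rho) &= L(x^*,\lambda_{ij},\rho) + \nabla_x L(x^*,\lambda_{ij},\rho)^\top (x-x^*) \\
&\quad + (x-x^*)^\top\left(I+\textstyle\sum\lambda_{ij}H_{ij}\right)(x-x^*).
\end{align*}
First I would handle the constant term: choosing $\rho = g(x^*)$ and using that $x^*$ is feasible for~\eqref{eq:matrix-polynomial} (so $f_{ij}(x^*)=0$ for all $i<j$), we get $L(x^*,\lambda_{ij},g(x^*)) = g(x^*) + \sum\lambda_{ij}f_{ij}(x^*) - g(x^*) = 0$. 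Next I would handle the linear term: by the observation just before the lemma, $\nabla_x L(x^*,\lambda_{ij},\rho) = \nabla g(x^*) + \sum\lambda_{ij}\nabla f_{ij}(x^*)$, which vanishes precisely by condition~(i). Substituting both into the expansion leaves exactly
$$
L(x,\lambda_{ij},g(x^*)) = (x-x^*)^\top\left(I+\textstyle\sum\lambda_{ij}H_{ij}\right)(x-x^*),
$$
which is the first claim. The second claim is then immediate: if condition~(ii) holds, the matrix $I+\sum\lambda_{ij}H_{ij}$ is positive semidefinite, so this quadratic form is $\ge 0$ for every $x\in\mathbb{R}^{2n}$.

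The only thing requiring a little care is justifying that the second-order Taylor expansion is exact with no higher-order remainder — but this is automatic because $L(\cdot,\lambda_{ij},\rho)$ is a polynomial of degree at most two in $x$ (it equals $\tilde{x}^\top(G+\sum\lambda_{ij}F_{ij}-\rho E)\tilde{x}$, and each entry of $\tilde{x}$ is either a coordinate of $x$ or the constant $1$). So the "hard part" is essentially bookkeeping: being careful that $\nabla_x$ and $\nabla^2_x$ refer to differentiation in the $x$-variables only (not the homogenizing coordinate), which is exactly how the pre-lemma observation defines them. I would therefore present the argument in the three bullet-free steps above — expansion, constant term via feasibility, linear term via~(i) — and then read off nonnegativity from~(ii).
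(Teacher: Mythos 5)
Your proof is correct, and it reaches the identity by a slightly different (and arguably cleaner) route than the paper. The paper expands $L(x,\lambda_{ij},g(x^*))$ in powers of $x$ about the origin, writing it as $x^{\top}(I+H)x+2(b-\widehat x)^{\top}x+\widehat x^{\top}\widehat x+\beta-g(x^*)$ with $H=\sum\lambda_{ij}H_{ij}$, $b=\sum\lambda_{ij}b_{ij}$, $\beta=\sum\lambda_{ij}\beta_{ij}$, and then matches the quadratic, linear, and constant coefficients against those of $(x-x^*)^{\top}(I+H)(x-x^*)$; this reduces to two identities, $(I+H)x^*=\widehat x-b$ (a restatement of condition (i)) and a constant-term identity that requires feasibility of $x^*$ plus some algebraic manipulation. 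You instead write the exact second-order Taylor expansion of the quadratic $L$ about $x^*$, so the constant term is $L(x^*,\lambda_{ij},g(x^*))=0$ directly from feasibility, the linear term vanishes directly from condition (i), and the quadratic term is $\tfrac12\nabla^2_xL=I+\sum\lambda_{ij}H_{ij}$ as recorded just before the lemma. The two arguments are the same computation organized around different base points; yours buys a shorter verification (no analogue of the paper's ``straightforward manipulations'' for the constant term), while the paper's version makes explicit the roles of $b$ and $\beta$ that reappear in the proof of Lemma~\ref{lem:homogeneous} and Theorem~\ref{thm:sdp-exactness}. Your remark that the expansion is exact because $L(\cdot,\lambda_{ij},\rho)$ has degree at most two, with gradients taken only in the $x$-variables, is exactly the right point of care.
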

\begin{lemma}
If $x^{\top} A x + 2b^{\top} x + c \geq 0,\ \forall x$, then 
$\begin{bmatrix}
A & b\\
b^{\top} & c
\end{bmatrix} 
\succeq 0$.
\label{lem:homogeneous}
\end{lemma}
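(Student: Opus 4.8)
The final statement to prove is Lemma~\ref{lem:homogeneous}: if $x^{\top} A x + 2b^{\top} x + c \geq 0$ for all $x$, then the bordered matrix $\begin{bmatrix} A & b\\ b^{\top} & c\end{bmatrix}$ is positive semidefinite.

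My plan is to exploit the homogenization trick that is already implicit throughout the paper (recall $\tilde{x} = [x;1]$). The key observation is that the given quadratic form in $x$ can be rewritten as a quadratic form in the homogenized vector: for any $x \in \reals^{2n}$,
\begin{align}
x^{\top} A x + 2b^{\top} x + c = \begin{bmatrix} x \\ 1\end{bmatrix}^{\top} \begin{bmatrix} A & b\\ b^{\top} & c\end{bmatrix}\begin{bmatrix} x \\ 1\end{bmatrix}.
\end{align}
So the hypothesis says precisely that the bordered matrix $M := \begin{bmatrix} A & b\\ b^{\top} & c\end{bmatrix}$ satisfies $\tilde{x}^{\top} M \tilde{x} \geq 0$ for every vector $\tilde{x}$ whose last coordinate equals $1$. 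To conclude $M \succeq 0$, I must upgrade this to nonnegativity over all vectors whose last coordinate is arbitrary, including zero.

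The main work is therefore a scaling-and-limiting argument. First I would handle vectors $\begin{bmatrix} u; t\end{bmatrix}$ with $t \neq 0$: writing $x = u/t$, homogeneity of degree two gives $\begin{bmatrix} u; t\end{bmatrix}^{\top} M \begin{bmatrix} u; t\end{bmatrix} = t^{2}\big(x^{\top} A x + 2 b^{\top} x + c\big) \geq 0$, since $t^2 \geq 0$ and the bracket is nonnegative by hypothesis. This establishes $v^{\top} M v \geq 0$ for all $v$ with nonzero last coordinate. Then I would treat the remaining case $t = 0$, i.e. $v = \begin{bmatrix} u; 0\end{bmatrix}$, by a continuity/limiting argument: the set of vectors with nonzero last coordinate is dense in $\reals^{2n+1}$, and $v \mapsto v^{\top} M v$ is continuous, so taking a sequence $v_k \to v$ with each $v_k$ having nonzero last entry yields $v^{\top} M v = \lim_k v_k^{\top} M v_k \geq 0$. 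Hence $v^{\top} M v \geq 0$ for all $v \in \reals^{2n+1}$, which is the definition of $M \succeq 0$.

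The only subtle point—and the step I would be most careful about—is the $t = 0$ case, since the naive substitution $x = u/t$ breaks down there; the scaling argument alone covers only an open dense cone, and one genuinely needs the closedness of the PSD cone (equivalently, continuity of the quadratic form) to pass to the boundary. An alternative that avoids limits entirely is to examine the $(2n) \times (2n)$ leading block directly: restricting to $t=0$ amounts to showing $u^{\top} A u \geq 0$, which follows from the hypothesis by replacing $x$ with $s u$ and letting the scalar $s \to \infty$, so that $s^2 u^{\top} A u + 2 s b^{\top} u + c \geq 0$ forces $u^{\top} A u \geq 0$ (otherwise the left side tends to $-\infty$). Either route works; I would present the scaling-plus-continuity version as the cleanest, noting that it is essentially the standard fact that a quadratic polynomial is globally nonnegative if and only if its bordered coefficient matrix is positive semidefinite.
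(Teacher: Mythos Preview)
Your proof is correct and essentially matches the paper's: both split on whether the last coordinate is nonzero and handle the nonzero case by the same rescaling $x = u/t$. For the zero-last-coordinate case the paper uses exactly your ``alternative'' route (take $x = s u$ and let $s \to \infty$, framed as a contradiction), so your primary density-plus-continuity argument is a mild variation on the same idea.
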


\begin{proof}[Theorem~\ref{thm:sdp-exactness}]
For the {\em if} direction, let $\lambda_{ij}$ satisfy conditions (i) and (ii). Then from Lemma~\ref{lem:lagrange-sos} we have $L(x,\lambda_{ij},g(x^{*})) \geq 0\ \forall x \in \reals^{2n}$, which combined with Lemma~\ref{lem:homogeneous} gives $G+\sum\lambda_{ij}F_{ij} - g(x^{*})E \succeq 0$. Therefore $\lambda_{ij}$ and $\rho = g(x^{*})$ are dual feasible, which in turn means  that $\gsos \ge \rho=g(x^{*}) = \gtwo$. Lemmas~\ref{lem:mom-relax-inequality} and \ref{lem:zero-duality} give the reverse inequality, thus $\gsos=\gmom=\gtwo = g(x^{*})$.

For the {\em only if} direction, let $\rho^{*}$ and $\lambda^{*}_{ij}$ be the optimal solution to the dual~\eqref{dual sdp}. If the problem is \sdpexact{} we have the equality $\rho^{*} = g(x^{*}) = \gtwo$ and from the dual feasibility of $\lambda^{*}_{ij}$ and $\rho^{*}$ we have that $G+\sum\lambda^{*}_{ij}F_{ij} - \rho^{*}E \succeq 0$. Taken together, these two facts imply that $L(x, \lambda^{*}_{ij}, g(x^{*})) \geq 0,\ \forall x \in \reals^{2n}$.

Since $L(x^{*}, \lambda^{*}_{ij}, g(x^{*})) = 0$, $L(x, \lambda^{*}_{ij}, g(x^{*}))$ is a non-negative quadratic polynomial that vanishes at $x^{*}$. Non-negativity implies condition (ii) (that the Hessian of the polynomial is positive semidefinite) and the fact that zero is the minimum possible value of a non-negative polynomial implies that its gradient vanishes at $x^{*}$ which is exactly condition (i). \qed
\end{proof}

%Note that Theorem~\ref{thm:sdp-exactness} provides both necessary and sufficient conditions for triangulation to be \sdpexact{}. \newtext{Actually, we've already seen that condition (ii) is automatically satisfied at the dual optimum $\lambda_{ij}^{*}$.} However, {\em a priori} verifying condition (i) requires knowledge of the optimal solution $x^{*}$ and is therefore not practical. By slightly tightening condition (ii) we arrive at an efficiently checkable necessary condition for triangulation to be \sdpexact{}.  This is more precisely stated in the following theorem.

Condition (ii) of Theorem~\ref{thm:sdp-exactness} is automatically satisfied by {\em any} feasible $\lambda_{ij}$ for the dual~\eqref{dual sdp}.  Hence, verifying SDP exactness using the dual optimal $\lambda_{ij}^*$ reduces to checking condition (i) of Theorem \ref{thm:sdp-exactness}.  Checking this condition is not computationally practical, since it requires knowledge of the optimum $x^*$. By slightly tightening condition (ii) we can bypass condition (i).

%Note that $\operatorname{rank}(Y^{*}) = 1$ implies that the SDP relaxation~\eqref{primal sdp} has found an optimal solution to the rank-constrained problem~\eqref{triangulation with matrices}, which we saw yields an optimum for~\eqref{eq:matrix-polynomial}.
%This gives us that in this case the triangulation problem is \sdpexact{}. 

\begin{theorem}
\label{thm:extract solution}
If  $\{Y^{*},\ \lambda_{ij}^{*},\ \rho^{*}\}$ are primal-dual optimal and
$I+\sum\lambda_{ij}^{*}H_{ij}\succ 0$, then $\operatorname{rank}(Y^{*}) = 1$, and triangulation is \sdpexact{}.
%the last column of $x^{*}=Y^*[2n+1;1:2n]$ is the globally optimal solution to the quadratic optimization~\eqref{eq:matrix-polynomial}. 
%If $x^{*} \in \overline{V}_{P}$, then the triangulation problem is \sdpsolvable{}. 
%Moreover, $\rho^*=\gsos = \gmom =\gtwo$ is the reprojection error of $x^{*}$.
\end{theorem}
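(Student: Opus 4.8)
The goal is to show that strict positive definiteness of $I+\sum\lambda_{ij}^{*}H_{ij}$ forces $Y^{*}$ to be rank one and the relaxation to be tight. I would argue via complementary slackness. Since $\{Y^{*},\lambda_{ij}^{*},\rho^{*}\}$ are primal-dual optimal for a pair of SDPs for which strong duality holds (Lemma~\ref{lem:zero-duality}), we have $\langle G+\sum\lambda_{ij}^{*}F_{ij}-\rho^{*}E,\ Y^{*}\rangle = 0$. Both factors are positive semidefinite — $Y^{*}$ is primal feasible and $M^{*}:=G+\sum\lambda_{ij}^{*}F_{ij}-\rho^{*}E\succeq 0$ is dual feasible — so the product of PSD matrices having zero trace inner product gives $M^{*}Y^{*}=0$. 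Hence the column space of $Y^{*}$ lies in the kernel of $M^{*}$.

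Next I would pin down $\ker(M^{*})$. Writing the quadratic form $\tilde{x}^{\top}M^{*}\tilde{x}=L(x,\lambda_{ij}^{*},\rho^{*})=g(x)+\sum\lambda_{ij}^{*}f_{ij}(x)-\rho^{*}$, its Hessian in $x$ is $2(I+\sum\lambda_{ij}^{*}H_{ij})$, which is \emph{strictly} positive definite by hypothesis. A PSD quadratic form $\tilde x^\top M^* \tilde x$ whose restriction to the affine slice (last coordinate $=1$) is a strictly convex function of $x$ can vanish on at most a one-dimensional subspace: concretely, if $M^{*}=\begin{bmatrix}A & b\\ b^{\top} & c\end{bmatrix}$ with $A=I+\sum\lambda_{ij}^{*}H_{ij}\succ 0$, then the minimum of $x^\top A x + 2b^\top x + c$ is attained at the unique $x_0=-A^{-1}b$, and $M^{*}\succeq 0$ together with $\dim\ker A = 0$ forces $\operatorname{rank}(M^{*})\ge 2n$, so $\dim\ker(M^{*})\le 1$. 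Since $Y^{*}\neq 0$ (its bottom-right entry is constrained by $\langle E,Y^*\rangle=1$), we get exactly $\operatorname{rank}(Y^{*})=1$, say $Y^{*}=vv^{\top}$; the constraint $\langle E,Y^*\rangle=1$ then normalizes $v$ to have last entry $\pm 1$, i.e. $Y^{*}=\tilde{x}^{*}\tilde{x}^{*\top}$ for some $x^{*}\in\reals^{2n}$. This $x^{*}$ is feasible for~\eqref{eq:matrix-polynomial} and achieves objective value $\gmom$, so $\gtwo\le g(x^{*})=\langle G,Y^{*}\rangle=\gmom$; combined with Lemma~\ref{lem:mom-relax-inequality} ($\gtwo\ge\gmom$) and Lemma~\ref{lem:zero-duality} ($\gmom=\gsos$), we conclude $\gtwo=\gmom=\gsos$, i.e. triangulation is \sdpexact.

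Alternatively — and perhaps cleaner given what is already proved — I could sidestep the complementary-slackness computation by invoking Theorem~\ref{thm:sdp-exactness} directly: any dual-feasible $\lambda_{ij}^{*}$ automatically satisfies condition (ii), and I would need to produce an optimal $x^{*}$ of~\eqref{eq:matrix-polynomial} at which condition (i) holds. But extracting that $x^{*}$ from $Y^{*}$ is precisely the rank-one argument above, so the two routes coincide. I would present the complementary-slackness version as the main line.

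\textbf{Main obstacle.} The crux is the kernel-dimension step: arguing that $M^{*}\succeq 0$ with $\operatorname{Hessian}(M^{*})\succ 0$ implies $\operatorname{rank}(M^{*})\ge 2n$, hence $\dim\ker(M^{*})\le 1$. This is a short linear-algebra fact (a PSD $(2n+1)\times(2n+1)$ matrix whose leading $2n\times 2n$ block is nonsingular has rank at least $2n$, by Schur complement / interlacing), but it must be stated carefully, including the observation that $Y^{*}\ne 0$ so its one-dimensional column space is genuinely in $\ker(M^{*})$ and the rank is exactly one rather than zero. Everything else — complementary slackness $M^{*}Y^{*}=0$, and the normalization via $\langle E,Y^{*}\rangle=1$ — is routine.
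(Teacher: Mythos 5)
Your proposal is correct and follows essentially the same route as the paper: the positive definite $2n\times 2n$ block forces $\operatorname{rank}(G+\sum\lambda_{ij}^{*}F_{ij}-\rho^{*}E)\ge 2n$, complementary slackness caps $\operatorname{rank}(Y^{*})$ at $1$, and $\langle E,Y^{*}\rangle=1$ rules out $Y^{*}=0$. If anything, you are slightly more complete than the printed proof, since you also spell out the extraction of $\tilde{x}^{*}$ from the rank-one $Y^{*}$ and the resulting chain $\gtwo\le g(x^{*})=\gmom\le\gtwo$ that establishes \sdpexact{}, a step the paper leaves implicit.
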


\begin{proof}
Notice that $I+\sum\lambda_{ij}^{*}H_{ij}$ is the top left $(2n)\times(2n)$ block in the larger $(2n+1)\times(2n+1)$ positive semidefinite matrix $G+\sum\lambda_{ij}^{*}F_{ij}-\rho^{*} E$.  By hypothesis, $I+\sum\lambda_{ij}^{*}H_{ij}$ is nonsingular and thus has full rank equal to $2n$, which implies
\begin{equation}
\label{eq:rank-2n}
{\rm rank}\left(G+\sum\lambda_{ij}^{*}F_{ij}-\rho^{*} E\right)\ge 2n.
\end{equation}

The dual and the primal SDP solutions satisfy {\em complementary slackness}, which means that $\left\langle G+\sum\lambda_{ij}^{*}F_{ij}-\rho^{*} E,Y^{*}\right\rangle = 0$. In particular it implies that 
\begin{equation}
{\rm rank}(G+\sum\lambda_{ij}^{*}F_{ij}-\rho^{*} E)+{\rm rank}(Y^{*})\le 2n+1,
\label{eq:rank-2n+1}
\end{equation}
where we use the standard fact that whenever $\langle A,B\rangle =0$ for $A,B\in\mathcal{S}^{N}_{+}$, then ${\rm rank}(A)+{\rm rank}(B)\le N$.
%\end{lemma}
% gives us
%\begin{equation}
%{\rm rank}(G+\sum\lambda_{ij}^{*}F_{ij}-\rho^{*} E)+{\rm rank}(Y^{*})\le 2n+1.
%\label{eq:rank-2n+1}
%\end{equation}
From~\eqref{eq:rank-2n} and~\eqref{eq:rank-2n+1} we have ${\rm rank}(Y^{*})\le 1$.  Since $\langle E,Y\rangle = 1$, we have $Y^*\ne 0$ and hence
${\rm rank}(Y^{*})=1$.
%Because ${\rm rank}(Y^{*})=1$ we now know that $Y^{*}$ is feasible for the triangulation problem \eqref{triangulation with matrices}, exactly the problem for which the primal~\eqref{primal sdp} is a relaxation.  This implies that $\gtwo \le \rho^* =\gmom$.  Lemma~\ref{lem:mom-relax-inequality} already gave us the opposite inequality, and therefore we must have $\gtwo=\gmom$, ending the proof.
\qed
\end{proof}

Line 4 of Algorithm~\ref{alg:triangulation} uses Theorem~\ref{thm:extract solution} to establish that we have solved~\eqref{eq:matrix-polynomial}. Lines 5--10 of the algorithm are then devoted to making sure that the solution actually lies in $\overline{V}_P$. Thus, Algorithm~\ref{alg:triangulation} is correct.

\subsection{Implications}
%We now consider some consequences of Theorems~\ref{thm:sdp-exactness} and~\ref{thm:extract solution}.

%If the image observations are noise free, i.e. there exists a point $X^{*} \in \reals^{3}$ such that $\widehat{x}_{i} = \Pi P_{i} X^{*},\ \forall i = 1,\ldots, n$, then we would hope that Algorithm~\ref{alg:triangulation} returns $X^{*}$ and certifies it to be optimal. Indeed this is the case.
\begin{theorem}
\label{thm:noiseless}
If the image observations are noise free, i.e. there exists $X^{*} \in \reals^{3}$ such that $\widehat{x}_{i} = \Pi P_{i} X^{*},\ \forall i = 1,\ldots, n$, then Algorithm~\ref{alg:triangulation} returns {\sf OPTIMAL}
\end{theorem}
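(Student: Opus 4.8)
The plan is to show that in the noise-free case the trivial feasible point is already optimal, and that a suitable dual certificate exists so that line~\ref{alg-line:definite} of Algorithm~\ref{alg:triangulation} succeeds. First I would observe that if $\widehat{x}_i = \Pi P_i \tilde{X}^{*}$ for all $i$, then $\widehat{x}\in V_P\subseteq W_P$, so $\widehat{x}$ is feasible for~\eqref{eq:abstract2} and hence $\widetilde{x} := \widehat{x}$ is feasible for the QCQP~\eqref{eq:matrix-polynomial}; it achieves objective value $g(\widehat{x}) = \|\widehat{x}-\widehat{x}\|^{2} = 0$. Since $G\succeq 0$, the objective of~\eqref{eq:matrix-polynomial} is nonnegative everywhere, so $\gtwo = 0$ and $x^{*} = \widehat{x}$ is a global optimum. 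By Lemma~\ref{lem:mom-relax-inequality} and Lemma~\ref{lem:zero-duality}, $0 = \gtwo \ge \gmom = \gsos$, and the dual~\eqref{dual sdp} is feasible (take $\lambda_{ij}=0$, $\rho=-1$), so $\gsos$ is a finite value $\le 0$; combined with the existence of primal-dual optimizers from Lemma~\ref{lem:zero-duality}, I would then pin down $\gmom = \gsos = 0$ directly: indeed $Y^{*} = \widetilde{x}\widetilde{x}^{\top}$ is primal feasible with $\langle G, Y^{*}\rangle = 0$, forcing $\gmom = 0$.

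Next I would produce a dual-optimal triple satisfying the strict condition $I + \sum\lambda_{ij}^{*}H_{ij}\succ 0$. The natural candidate is $\lambda_{ij}^{*} = 0$ for all $i<j$ and $\rho^{*} = 0$. Dual feasibility requires $G + 0 - 0\cdot E = G\succeq 0$, which holds since $G\in\mathcal{S}^{2n+1}_{+}$; and $\rho^{*} = 0 = \gsos$, so this triple is dual optimal. With these multipliers, $I + \sum\lambda_{ij}^{*}H_{ij} = I\succ 0$, so the test on line~\ref{alg-line:definite} passes. At this point Theorem~\ref{thm:extract solution} applies (with this choice of $\lambda^{*}_{ij}$, together with the primal optimizer $Y^{*}$), giving $\operatorname{rank}(Y^{*})=1$ and hence $Y^{*} = xx^{\top}$ for the extracted $x$ on line~2; and since any rank-one primal optimum corresponds to a feasible $x\in W_P$ attaining $\gtwo$, the extracted $x$ is optimal for~\eqref{eq:matrix-polynomial}.

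It then remains to check that the algorithm returns {\sf OPTIMAL} rather than falling through to line~12. If $n=2$ or $n\ge 4$ with non-co-planar cameras, line~6 returns {\sf OPTIMAL} immediately. Otherwise we reach the test $x_i = \Pi P_i \tilde{X}\ \forall i$; here I would argue that the extracted $x$ lies in $W_P$ and that the only point of $W_P$ with objective value $0$ is $\widehat{x}$ itself, which lies in $V_P$ by hypothesis, so the trilinear constraints are satisfied and the SVD-recovered $X$ reprojects exactly to $x$. \textbf{The main obstacle} is precisely this last point: an SDP solver need not return the \emph{particular} primal optimum $\widehat{x}\widehat{x}^{\top}$, but once we know every primal optimum has rank one and objective $0$, uniqueness of the minimizer of $\|x-\widehat{x}\|^{2}$ over $W_P$ (the minimum value $0$ is attained only at $x=\widehat{x}$, since $\|x-\widehat{x}\|^{2}=0 \iff x=\widehat{x}$) forces $x=\widehat{x}\in V_P$, closing the argument. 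A secondary subtlety is that $G$ is only positive \emph{semi}definite, so $g$ could in principle vanish off $\{\widehat{x}\}$ — but $g(x)=\|x-\widehat{x}\|^{2}$ vanishes exactly at $\widehat{x}$, which resolves it.
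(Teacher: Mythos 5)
Your proof is correct and follows essentially the same route as the paper, whose entire argument is that $\lambda_{ij}=0$ (with $\rho^{*}=0$) is a dual certificate satisfying the hypotheses of Theorems~\ref{thm:sdp-exactness} and~\ref{thm:extract solution}; you simply spell out the details the paper leaves implicit, including the rank-one extraction and the fall-through logic of lines 5--10. The one caveat you partially flag --- that the solver may return a primal/dual optimum other than the constructed one, so the positive-definiteness test on line~\ref{alg-line:definite} is applied to whatever $\lambda^{*}_{ij}$ the solver produces rather than to $\lambda_{ij}=0$ --- is equally unaddressed in the paper's own one-line proof, so it is not a defect specific to your argument.
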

\begin{proof}
Setting $\lambda_{ij} = 0$ satisfies the hypothesis for Theorems~\ref{thm:sdp-exactness} and~\ref{thm:extract solution}.\qed
\end{proof}

%Another interesting case is the two view triangulation problem.  For general triangulation problems, conditions (i) and (ii) of Theorem~\ref{thm:sdp-exactness} don't always hold and depend on the particular configuration of cameras $P$ and image observations $\widehat{x}$. However for $n=2$ we can show a much stronger result.

\begin{theorem}
\label{thm:two-view-exact}
Two view triangulation is \sdpexact{}.
\end{theorem}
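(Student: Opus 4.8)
The plan is to exhibit, for $n=2$, a choice of multipliers $\lambda_{12}$ satisfying conditions (i) and (ii) of Theorem~\ref{thm:sdp-exactness}, so that SDP exactness follows immediately. Since there is only a single epipolar constraint when $n=2$, we are looking for one scalar $\lambda = \lambda_{12}$ with $\nabla g(x^{*}) + \lambda\,\nabla f_{12}(x^{*}) = 0$ and $I + \lambda H_{12} \succeq 0$, where $x^{*}$ is an optimal solution of~\eqref{eq:matrix-polynomial}. The first-order (KKT) conditions for~\eqref{eq:matrix-polynomial} at the minimizer $x^{*}$ already guarantee the existence of a multiplier $\lambda$ satisfying condition (i), provided a constraint qualification holds at $x^{*}$; the epipolar variety $W_{P}$ is a smooth hypersurface away from a lower-dimensional singular locus, and one checks that at a genuine minimizer $\nabla f_{12}(x^{*}) \ne 0$, so LICQ holds and such a $\lambda$ exists. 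The real content is therefore showing that this same $\lambda$ automatically satisfies condition (ii).

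For condition (ii) I would exploit the special structure of $H_{12}$. Recall $f_{12}(x) = \tilde{x}_{1}^{\top} F \tilde{x}_{2}$ with $F \in \reals^{3\times 3}$ the fundamental matrix, so as a function of $x = (x_{1}; x_{2}) \in \reals^{4}$ the polynomial $f_{12}$ is \emph{bilinear} in the two pairs of coordinates and contains no $x_{1}$--$x_{1}$ or $x_{2}$--$x_{2}$ terms. Consequently the Hessian $2H_{12}$ has the block form $\begin{bmatrix} 0 & M \\ M^{\top} & 0\end{bmatrix}$ with $M \in \reals^{2\times 2}$ the upper-left $2\times 2$ block of $F$. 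The eigenvalues of such an off-diagonal block matrix are $\pm$ the singular values of $M$, so $H_{12}$ has eigenvalues $\{\pm\sigma_{1}(M), \pm\sigma_{2}(M)\}$ and $I + \lambda H_{12} \succeq 0$ is equivalent to $|\lambda|\,\sigma_{\max}(M) \le 1$. Thus condition (ii) reduces to the single scalar bound $|\lambda| \le 1/\sigma_{\max}(M)$, and the task is to show the KKT multiplier at the minimizer always obeys it.

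To get that bound I would argue geometrically, as the paper hints it will in Section~\ref{sec:geometry of algorithm}: the minimizer $x^{*}$ is the point of the quadric $W_{P}$ closest to $\widehat x$, and $\lambda$ measures the reciprocal of (signed) curvature-adjusted distance along the normal — concretely, $-\lambda^{-1}$ plays the role of a focal-distance parameter, and $1+\lambda H_{12}$ being the Hessian of the squared-distance-type function $L(\cdot,\lambda,g(x^{*}))$ at $x^{*}$, its positive semidefiniteness is exactly the second-order necessary condition for $x^{*}$ to be a \emph{local} minimum of that function on all of $\reals^{4}$. By Lemma~\ref{lem:lagrange-sos}, $L(x,\lambda,g(x^{*})) = (x-x^{*})^{\top}(I+\lambda H_{12})(x-x^{*})$ whenever (i) holds, so its Hessian is constant; if it failed to be PSD, then $L$ would take negative values, contradicting that $L(x,\lambda,g(x^{*})) = g(x) + \lambda f_{12}(x) - g(x^{*}) \ge 0$ for all $x \in W_{P}$ — but that only controls $L$ on the variety, not on all of $\reals^{4}$. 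I expect the main obstacle to be closing precisely this gap: ruling out that the unconstrained quadratic $L(\cdot,\lambda,g(x^{*}))$ is indefinite while still nonnegative on $W_{P}$. The cleanest route is probably to use that $W_{P}$, as the zero set of a single nondegenerate indefinite quadratic $f_{12}$ through $x^{*}$, is ``spread out'' in every direction of the tangent cone, forcing an indefinite $I+\lambda H_{12}$ to produce a feasible descent direction and hence contradicting optimality of $x^{*}$; alternatively one can invoke the $S$-lemma (exactness of the SDP relaxation for a single quadratic constraint), which in the $n=2$ case directly yields $\gtwo = \gsos$ once one rules out trivial infeasibility, and then read off (ii) from dual feasibility via Theorem~\ref{thm:sdp-exactness}.
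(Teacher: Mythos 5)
Your reduction is the same as the paper's: exhibit a single multiplier $\lambda$ satisfying conditions (i) and (ii) of Theorem~\ref{thm:sdp-exactness} at the optimum $x^{*}$. But your primary route for establishing (ii) does not close. The block-structure observation about $H_{12}$ (off-diagonal blocks built from $M=F[1:2,1:2]$; the eigenvalues are $\pm\sigma_i(M)/2$, not $\pm\sigma_i(M)$, though this is immaterial) correctly reduces (ii) to a scalar bound on $|\lambda|$, but nothing in your argument actually bounds the Lagrange multiplier. The geometric claim that $W_P$ is ``spread out in every direction of the tangent cone'' cannot do the job: nonnegativity of $L(\cdot,\lambda,g(x^{*}))$ on the codimension-one quadric $W_P$ only forces $I+\lambda H_{12}$ to be positive semidefinite on the tangent hyperplane at $x^{*}$ (the second-order necessary condition), and upgrading this to positive semidefiniteness on all of $\reals^{4}$ is precisely the nontrivial content of the theorem --- it is a special feature of having a \emph{single} quadratic constraint and fails already for two. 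You flag this gap yourself, and the curvature heuristic does not fill it.

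Your fallback --- invoke the S-lemma / exact Lagrangian duality for a single quadratic constraint --- is in fact the paper's entire proof: it cites Mor\'e's Theorem 3.2 on the generalized trust-region problem, which says that for $\min\{g(x)\,:\,f_{12}(x)=0\}$ with both functions quadratic, $x^{*}$ is a global minimizer if and only if there exists $\lambda$ satisfying exactly (i) and (ii). Two remarks if you take this route. First, you need the \emph{equality-constrained} version rather than the classical inequality S-lemma; its hypothesis that the constraint takes both positive and negative values holds here because the quadratic part $x_1^{\top}Mx_2$ of $f_{12}$ is indefinite whenever $M\neq 0$ (and condition (ii) is vacuous when $M=0$). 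Second, your LICQ discussion becomes unnecessary: since Mor\'e's result is an ``if and only if'' characterization of global minimizers, the existence of a multiplier satisfying (i) at the optimum comes for free, with no separate constraint-qualification argument (which is fortunate, as the quadric does have a singular locus and ruling out that $x^{*}$ lands on it is not as routine as your proposal suggests).
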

\begin{proof}
For $n=2$ the triangulation problem~\eqref{eq:matrix-polynomial} involves minimizing a quadratic objective over a single quadratic equality constraint $f_{12}(x)=0$. The conditions of Theorem~\ref{thm:sdp-exactness} in this case reduce to finding $\lambda\in\mathbb{R}$ satisfying %In light of Theorem~\ref{thm:sdp-exactness} proving that two view triangulation is always \sdpexact{} requires showing that there always exists a $\lambda \in \reals$ satisfying
\begin{equation}
\nabla g(x^{*}) + \lambda \nabla f_{12}(x^{*}) = 0\qquad\text{and}\qquad I+\lambda H_{12}  \succeq 0. \label{eq:twoview}
\end{equation}
The existence of such a $\lambda$ follows directly from \cite[Theorem 3.2]{more-1993}.\qed
\end{proof}

Theorems~\ref{thm:extract solution} and~\ref{thm:two-view-exact} nearly imply that two-view triangulation can be solved in polynomial time. We say {\em nearly} because, despite Theorem~\ref{thm:two-view-exact}, it is possible that the matrix $I+\lambda^* H_{12}$ is singular for the dual optimal $\lambda^*$ (see Appendix for an example). This is not a contradiction, since Theorem~\ref{thm:extract solution} is only a sufficient condition for optimality.  Despite such pathologies, we shall see in Section~\ref{sec:experiments} that in practice Algorithm~\ref{alg:triangulation} usually returns  {\sf OPTIMAL} for two-view triangulation.

%Theorems~\ref{thm:two-view-exact} and~\ref{thm:extract solution} together nearly recover the classic result of Hartley \& Sturm \cite{hartley-sturm-1997} that two-view triangulation can be solved in polynomial time. We say {\em nearly} because, despite Theorem~\ref{thm:two-view-exact}, it is not true that the matrix $I+\lambda^* H_{12}$ is nonsingular for the dual optimal $\lambda^*$. In fact, there exist two-view triangulation problems in which this matrix is singular (see Appendix), and such examples will cause Algorithm~\ref{alg:triangulation} to fail for two-view triangulation.  This is not a contradiction, since Theorem~\ref{thm:extract solution} is only a sufficient condition for optimality.  Despite such pathologies, we shall see in Section~\ref{sec:experiments} that in practice Algorithm~\ref{alg:triangulation} returns {\sf OPTIMAL} in all instances of two-view triangulation.

\subsection{Geometry of the Algorithm}
\label{sec:geometry of algorithm}
%Let us now consider the relationship between the image observations $\widehat{x}$ and an optimal solution $x^{*}$ to the quadratic optimization problem~\eqref{eq:matrix-polynomial}.
Recall that the optimization problem~\eqref{eq:matrix-polynomial} can be interpreted as determining the closest point $x^*$ to $\widehat{x}$ in the variety $W_P$. This viewpoint gives geometric intuition for why Algorithm~\ref{alg:triangulation} can be expected to perform well in practice. 

\begin{lemma}
\label{lem:gradient-slice}
Given $\widehat{x}$, and assuming appropriate regularity conditions at the optimal solution $x^{*}$ to~\eqref{eq:matrix-polynomial}, 
there exist $\lambda_{ij} \in \reals$ such that 
\begin{equation}
\label{eq:in-gradient-slice}
\widehat x = x^* + \sum\frac{\lambda_{ij}}{2}\nabla f_{ij}(x^*)
\end{equation}
\end{lemma}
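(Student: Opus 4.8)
The plan is to apply the Lagrange multiplier (KKT) conditions to the quadratically constrained quadratic program~\eqref{eq:matrix-polynomial}. The objective $g(x) = \|x - \widehat x\|^2$ has gradient $\nabla g(x) = 2(x - \widehat x)$, so the stationarity condition of the constrained problem at the optimum $x^*$ is exactly what we need, up to rearranging. First I would invoke the first-order necessary conditions for optimality: under a constraint qualification (this is the ``appropriate regularity condition'' mentioned in the statement — for instance, that the gradients $\nabla f_{ij}(x^*)$ of the active constraints are linearly independent, i.e.\ LICQ holds at $x^*$), there exist multipliers $\lambda_{ij} \in \reals$ such that
\begin{equation*}
\nabla g(x^*) + \sum \lambda_{ij}\, \nabla f_{ij}(x^*) = 0.
\end{equation*}
Since all constraints $f_{ij}(x) = 0$ are equality constraints, every one is ``active'' and there is no sign restriction on the $\lambda_{ij}$, so this is just the standard Lagrange condition.

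Next I would substitute $\nabla g(x^*) = 2(x^* - \widehat x)$ into the stationarity equation, obtaining
\begin{equation*}
2(x^* - \widehat x) + \sum \lambda_{ij}\, \nabla f_{ij}(x^*) = 0,
\end{equation*}
and then solve for $\widehat x$:
\begin{equation*}
\widehat x = x^* + \sum \frac{\lambda_{ij}}{2}\, \nabla f_{ij}(x^*),
\end{equation*}
which is precisely~\eqref{eq:in-gradient-slice}. This is essentially a one-line rewriting once the KKT conditions are in hand; note that the multipliers $\lambda_{ij}$ here are the same ones appearing in condition (i) of Theorem~\ref{thm:sdp-exactness} (indeed, if the problem is \sdpexact{}, one may take these to be the dual-optimal $\lambda_{ij}^*$, and in general the stationarity condition (i) is exactly the ``$\nabla g + \sum \lambda_{ij}\nabla f_{ij} = 0$'' relation being exploited here).

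The only real subtlety — and the step I expect to be the main obstacle — is justifying the constraint qualification that licenses the existence of the multipliers. The variety $W_P$ is cut out by bilinear epipolar equations, and at special configurations (e.g.\ when $x^*$ lies on a singular point of $W_P$, or when the active-constraint gradients fail to be linearly independent) the KKT conditions may not hold in the naive form. This is why the lemma is stated with the hedge ``assuming appropriate regularity conditions''; a fully rigorous treatment would either assume LICQ (or the weaker Mangasarian–Fromovitz condition) at $x^*$ outright, or argue that it holds generically in $\widehat x$. I would state the regularity hypothesis explicitly and then the proof reduces to the two short algebraic steps above. A geometric restatement worth including: $\nabla f_{ij}(x^*)$ spans (a subspace of) the normal space to the variety $\{f_{ij} = 0\}$ at $x^*$, so~\eqref{eq:in-gradient-slice} says that $\widehat x$ lies in the affine normal ``slice'' through $x^*$ — which is the content that makes the geometric discussion in Section~\ref{sec:geometry of algorithm} go through.
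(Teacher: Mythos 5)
Your proposal is correct and follows exactly the paper's own argument: invoke the Lagrange multiplier (stationarity) condition at $x^*$ under the stated regularity assumption, note $\nabla g(x^*)=2(x^*-\widehat x)$, and rearrange to obtain~\eqref{eq:in-gradient-slice}. Your added remarks on constraint qualifications (LICQ/MFCQ) and the normal-space interpretation are consistent elaborations of the same proof.
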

\begin{proof}
It follows from Lagrange multiplier theory~\cite{nocedal-wright-1999} that there exist Lagrange multipliers $\lambda_{ij}\in\mathbb{R}$ such that $\nabla g(x^*)+\sum\lambda_{ij}\nabla f_{ij}(x^*)=0$.  Observe that for~\eqref{eq:matrix-polynomial} $\nabla g(x^*)=2(x^*-\widehat x)$, which finishes the proof. \qed
\end{proof}

If $\|x^*-\widehat{x}\|$ is small, i.e. $\widehat x$ is close to the variety $W_{P}$, then there must exist some $\lambda_{ij}$ satisfying \eqref{eq:in-gradient-slice} such that $\|\lambda\|$ is small and hence $I+\sum\lambda_{ij}H_{ij}$ is a small perturbation of the positive definite identity matrix $I$. Since $I$ lies in the interior of the positive semidefinite cone, these small perturbations also lie in the interior, that is $I+\sum\lambda_{ij}H_{ij}\succ 0$.

This, coupled with the fact that $\lambda_{ij}$ are Lagrange multipliers at $x^{*}$, yields the sufficient conditions in Theorem~\ref{thm:sdp-exactness} for a triangulation problem to be \sdpexact{}. Thus, if the amount of noise in the observations is small, Algorithm~\ref{alg:triangulation} can be expected to recover the optimal solution to the triangulation problem.
Since $H_{ij}$ depends only on the cameras $P_{i}$ and not on $\widehat x$, %the size of $N(x^*)$
the amount of noise which Algorithm~\ref{alg:triangulation} will tolerate depends only on $P_{i}$.
%Therefore, the cameras $P_i$ determine the amount of image noise (distance from $W_P$) under which Algorithm~\ref{alg:triangulation} will return {\sf OPTIMAL}.
We summarize this formally in the following theorem.

%Let us now directly consider the perturbations of a noise-free image $x^*\in W_{P}$.
%Lemma~\ref{lem:gradient-slice} and Theorem~\ref{thm:extract solution} can also be used to describe the behavior of Algorithm~\ref{alg:triangulation} around a point $x^{*} \in W_{P}$. 

%If an image $\widehat x$ is in a small neighborhood around the variety $W_{P}$, then Algorithm~\ref{alg:triangulation} always returns {\sf OPTIMAL}.

\begin{theorem}
\label{thm:neighborhood}
Let $N(x^{*}) = \{x^* + \sum\frac{\lambda_{ij}}{2}\nabla f_{ij}(x^*)\ |\ I+\sum\lambda_{ij}H_{ij}\succ 0\}$. For any $\widehat x\in\reals^{2n}$,  if Algorithm~\ref{alg:triangulation} returns {\sf OPTIMAL} and $x^{*}$ is the optimal image projection vector  then  $\widehat x\in N(x^{*})$.  Conversely, if $\widehat{x}\in N(x^{*})$ and $x^{*}$ is the closest point in $W_{P}$ to $\widehat x$, then Algorithm~\ref{alg:triangulation} will return {\sf OPTIMAL}.
\end{theorem}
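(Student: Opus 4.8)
The plan is to notice that membership in $N(x^{*})$ is nothing but conditions (i) and (ii) of Theorem~\ref{thm:sdp-exactness}, rephrased in terms of the observation vector $\widehat{x}$ and with (ii) sharpened to strict positive definiteness. Indeed, for problem~\eqref{eq:matrix-polynomial} we have $\nabla g(x^{*}) = 2(x^{*}-\widehat{x})$ (as in the proof of Lemma~\ref{lem:gradient-slice}), so the identity $\widehat{x} = x^{*} + \sum\frac{\lambda_{ij}}{2}\nabla f_{ij}(x^{*})$ of~\eqref{eq:in-gradient-slice} is equivalent to $\nabla g(x^{*}) + \sum\lambda_{ij}\nabla f_{ij}(x^{*}) = 0$. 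Hence $\widehat{x}\in N(x^{*})$ \emph{if and only if} there exist multipliers $\lambda_{ij}$ satisfying condition (i) of Theorem~\ref{thm:sdp-exactness} together with $I+\sum\lambda_{ij}H_{ij}\succ 0$. Both implications of the theorem then reduce to feeding this equivalence into Theorems~\ref{thm:sdp-exactness} and~\ref{thm:extract solution}.

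For the forward implication, suppose Algorithm~\ref{alg:triangulation} returns {\sf OPTIMAL}. Then the test on line~\ref{alg-line:definite} passed, so the dual optimal $\lambda^{*}_{ij}$ satisfy $I+\sum\lambda^{*}_{ij}H_{ij}\succ 0$; by Theorem~\ref{thm:extract solution}, $\operatorname{rank}(Y^{*})=1$ and the triangulation problem is \sdpexact{}, so $Y^{*}=\tilde{x}^{*}(\tilde{x}^{*})^{\top}$ with $x^{*}=Y^{*}[1:2n,2n+1]$ the extracted — and, by \sdpexact{}ness, optimal — image projection vector. Since the problem is \sdpexact{}, the \emph{only if} part of Theorem~\ref{thm:sdp-exactness}, applied to the same $\lambda^{*}_{ij}$, yields condition (i). Condition (i) together with $I+\sum\lambda^{*}_{ij}H_{ij}\succ 0$ is precisely $\widehat{x}\in N(x^{*})$ by the first paragraph.

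For the converse, suppose $\widehat{x}\in N(x^{*})$ and $x^{*}$ is the point of $W_{P}$ closest to $\widehat{x}$, i.e.\ an optimal solution of~\eqref{eq:matrix-polynomial}. By the first paragraph there are $\lambda_{ij}$ satisfying (i) and $I+\sum\lambda_{ij}H_{ij}\succ 0$, hence also (ii); the \emph{if} part of Theorem~\ref{thm:sdp-exactness} then shows the problem is \sdpexact{} and, as in that proof, that $(\lambda_{ij},\ \rho=g(x^{*}))$ is dual optimal. Thus $\{\tilde{x}^{*}(\tilde{x}^{*})^{\top},\ \lambda_{ij},\ g(x^{*})\}$ is a primal-dual optimal triple obeying the hypotheses of Theorem~\ref{thm:extract solution}, so the test on line~\ref{alg-line:definite} is satisfiable at \emph{some} dual optimum. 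To see that it is satisfied at the dual optimum actually produced on line~1, use that a standard interior-point solver returns a dual solution in the relative interior of the dual optimal face: writing $M(\lambda)=I+\sum\lambda_{ij}H_{ij}$, if $M(\lambda_{0})\succ 0$ for one dual optimum $\lambda_{0}$ and $\lambda_{1}$ lies in the relative interior, then $\lambda_{1}=(1-t)\lambda_{0}+t\lambda_{2}$ for some dual optimum $\lambda_{2}$ and $t\in(0,1)$, whence $M(\lambda_{1})\succeq(1-t)M(\lambda_{0})\succ 0$; so line~\ref{alg-line:definite} succeeds. Finally, if $n=2$ or $n\ge 4$ with non-coplanar cameras then $W_{P}=\overline{V}_{P}$ and Algorithm~\ref{alg:triangulation} returns {\sf OPTIMAL} immediately; otherwise, provided the closest point $x^{*}$ of $W_{P}$ also lies in $\overline{V}_{P}$ (equivalently, that~\eqref{eq:abstract2} and~\eqref{eq:abstract-closure} share a minimizer), the world point $X$ recovered from $x^{*}$ on line~\ref{alg-line:extract} satisfies $x^{*}_{i}=\Pi P_{i}\tilde{X}$ for all $i$, so the trilinear consistency check within lines~5--10 passes and Algorithm~\ref{alg:triangulation} again returns {\sf OPTIMAL}.

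The step I expect to be the crux is the last one in the converse: knowing that \emph{some} dual optimum passes the line~\ref{alg-line:definite} test does not by itself imply that the dual optimum returned by the solver does, and this gap must be closed using maximal complementarity of interior-point solutions together with convexity of the dual optimal face; a secondary bookkeeping issue is handling the case $W_{P}\supsetneq\overline{V}_{P}$ (i.e.\ $n=3$ or coplanar cameras) so as to exclude spurious, non-$\overline{V}_{P}$ minimizers. Everything else is a direct application of Theorems~\ref{thm:sdp-exactness} and~\ref{thm:extract solution}.
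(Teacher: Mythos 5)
Your proof is correct and follows essentially the same route as the paper, whose own proof is just the one-line remark that the result is ``a straightforward application of Lemma~\ref{lem:gradient-slice} and Theorem~\ref{thm:extract solution}'': since $\nabla g(x^{*})=2(x^{*}-\widehat{x})$, membership of $\widehat{x}$ in $N(x^{*})$ is precisely condition (i) of Theorem~\ref{thm:sdp-exactness} together with strict definiteness of $I+\sum\lambda_{ij}H_{ij}$, and the two implications then follow from Theorems~\ref{thm:sdp-exactness} and~\ref{thm:extract solution}. You go beyond the paper in two places it silently glosses over --- closing the gap between ``some dual optimum passes the line~\ref{alg-line:definite} test'' and ``the dual optimum actually returned by the solver passes it'' via maximal complementarity and convexity of the dual optimal face (and noting that strict definiteness forces the minimizer, hence the extracted point, to be unique), and flagging that the converse needs the extra proviso that the nearest point of $W_{P}$ also lie in $\overline{V}_{P}$ when the cameras are coplanar --- both of which are legitimate refinements rather than a different proof strategy.
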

\begin{proof}
The proof is a straightforward application of Lemma~\ref{lem:gradient-slice} and Theorem~\ref{thm:extract solution}.\qed
\end{proof}

\section{Experiments}
\label{sec:experiments}
%In this section we describe the implementation of~\ref{alg:triangulation} and demonstrate on synthetic and real data.

%\subsection{Implementation Details}
Algorithm~\ref{alg:triangulation} was implemented using {\tt YALMIP}~\cite{yalmip}, {\tt SeDuMi}~\cite{sedumi} and {\tt MATLAB}.  These tools allow for easy implementation and the timings below are for completeness  and should not be used to judge the runtime performance of the algorithm.

Fundamental matrices and epipolar constraints are specified only up to scale and badly scaled fundamental matrices lead to poorly conditioned SDPs. This was easily fixed by dividing each $F_{ij}$ by its largest singular value.

Algorithm~\ref{alg:triangulation} verifies optimality by checking the positive definiteness of $I+\sum\lambda_{ij}^{*}H_{ij}\succ 0$, which requires that its smallest eigenvalue be greater than some $\delta > 0$. In all our experiments we set $\delta = 0.05$.

Either Algorithm~\ref{alg:triangulation} returns {\sf OPTIMAL} and the solution is guaranteed to be optimal, or it returns {\sf SUBOPTIMAL} in which case we cannot say anything about the quality of the solution,  even though it could still be optimal or at least serve as a good guess for non-linear iteration. 
Thus, we run Algorithm~\ref{alg:triangulation} on a number of synthetic and real-world datasets and report the fraction of cases in which the algorithm certifies optimality.

%Either Algorithm~\ref{alg:triangulation} returns {\sf OPTIMAL} and the solution is guaranteed to be optimal, or it returns {\sf SUBOPTIMAL} in which case we cannot say anything about the quality of the solution. While this does not necessarily mean that the solution is poor, we treat it as a failure, even though it could still be optimal or at least serve as a good guess for non-linear iteration. Thus we run Algorithm~\ref{alg:triangulation} on a number of synthetic and real-world datasets and report the fraction of cases in which the algorithm certifies optimality.

\subsection{Synthetic Data}

To test the performance of the algorithm as a function of camera configuration and image noise we first tested Algorithm~\ref{alg:triangulation} on three synthetic data sets. Following~\cite{Olsson-el-al-2009}, we created instances of the triangulation problem by randomly generating points inside the unit cube in $\mathbb{R}^3$.  

For the first experiment, a varying number of cameras (2, 3, 5 and 7) were placed uniformly at random on a sphere of radius 2. In the second experiment, the same number of cameras were uniformly distributed on a circle of radius 2 in the $xy$-plane. In the third experiment they were restricted to the $x$-axis and were placed at a distance of $3$, $5$, $7$, and $9$ units. These setups result in image measurements with an approximate
square length of 2 units.  Gaussian noise of varying standard deviation
was added to the image measurements. The maximum standard deviation was $0.2$, which corresponds to about $10\%$ of the image size. For each noise level we ran the experiment 375 times. Figures~\ref{fig:synthetic graph}(a), (b) and (c) show the fraction of test cases for  which Algorithm~\ref{alg:triangulation} returned {\sf OPTIMAL} as a function of the the standard deviation of perturbation noise.

\begin{figure}[t]
\begin{center}
\begin{tabular}{ccc}
\includegraphics[width=0.33\linewidth]{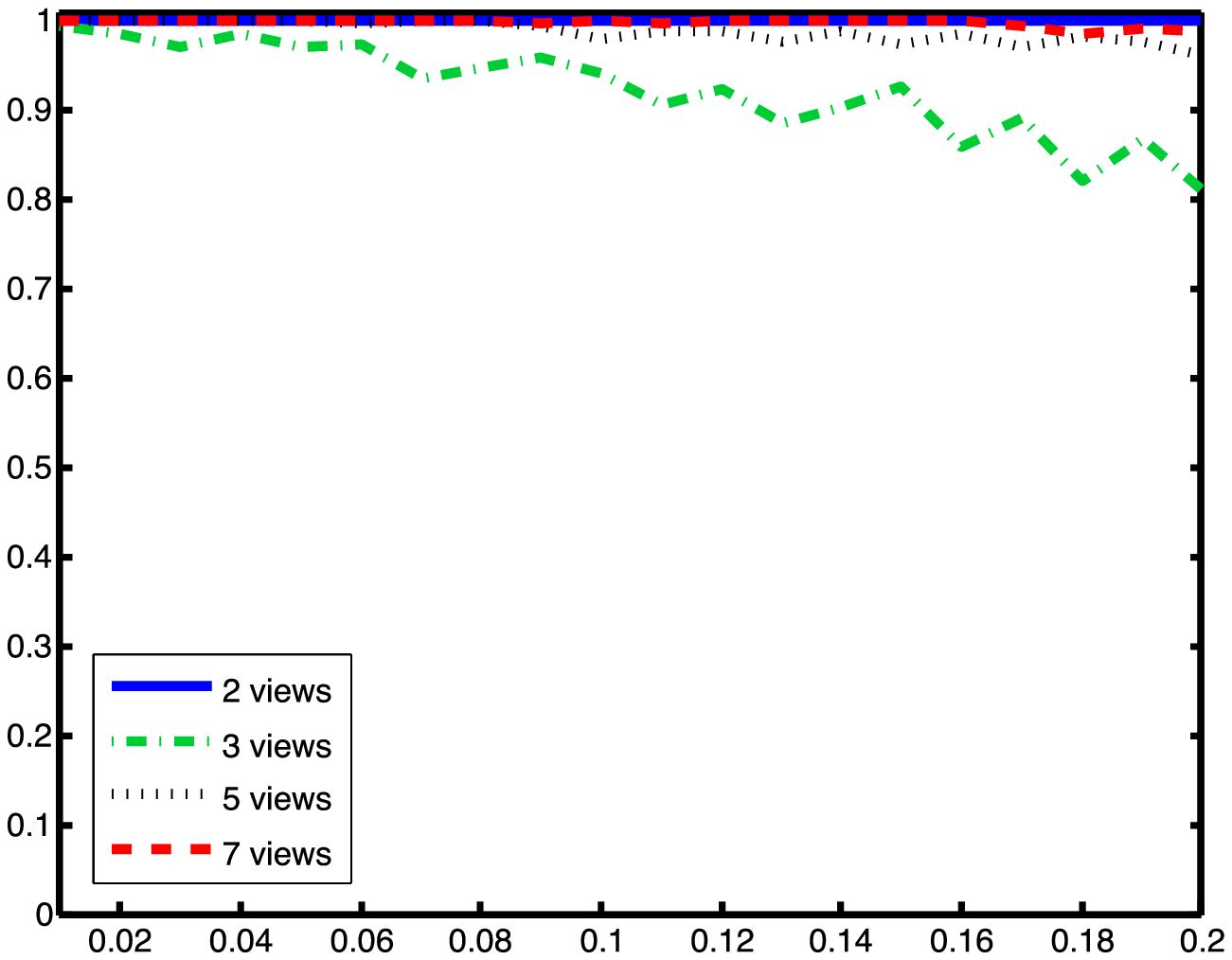} & 
\includegraphics[width=0.33\linewidth]{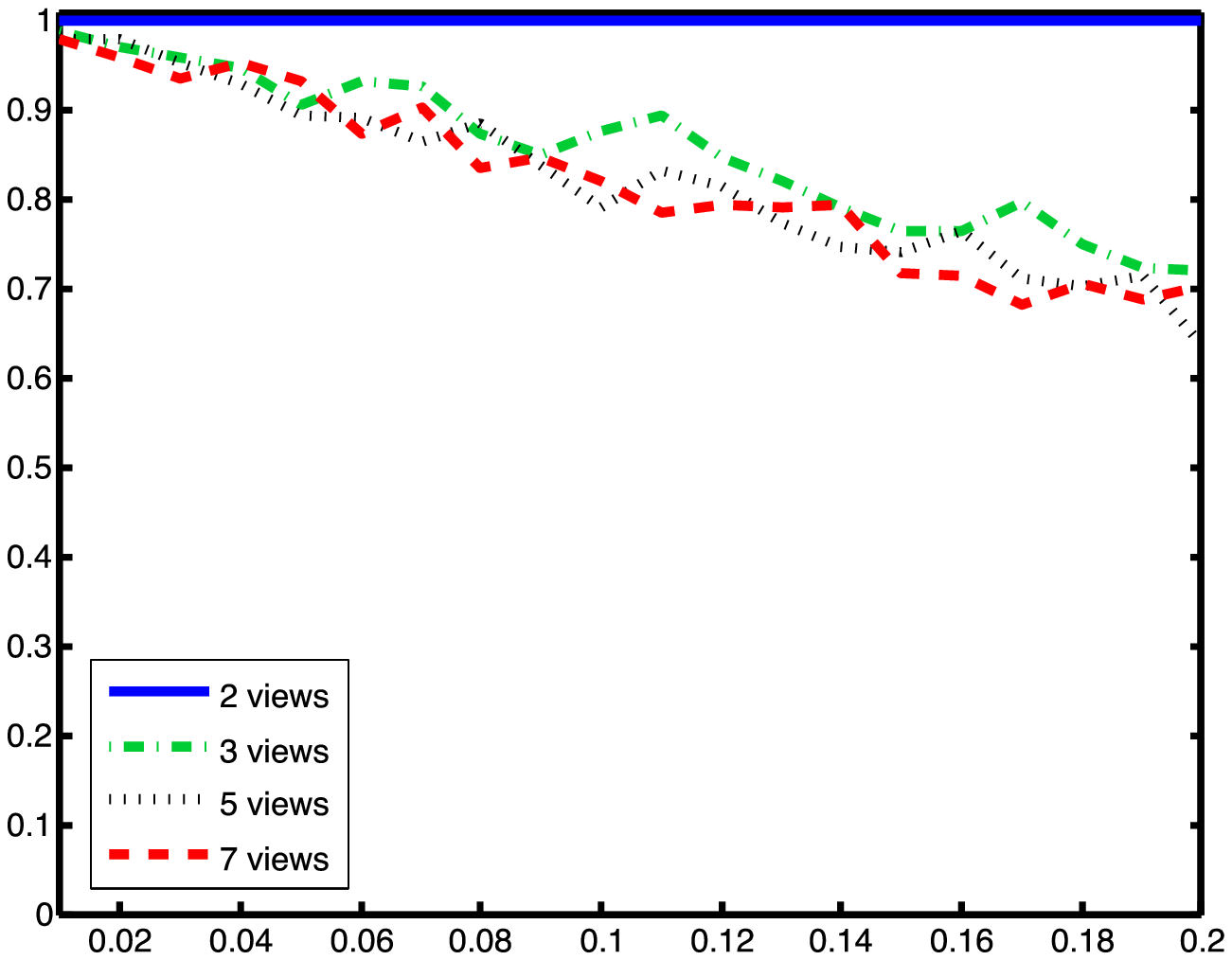} &
\includegraphics[width=0.33\linewidth]{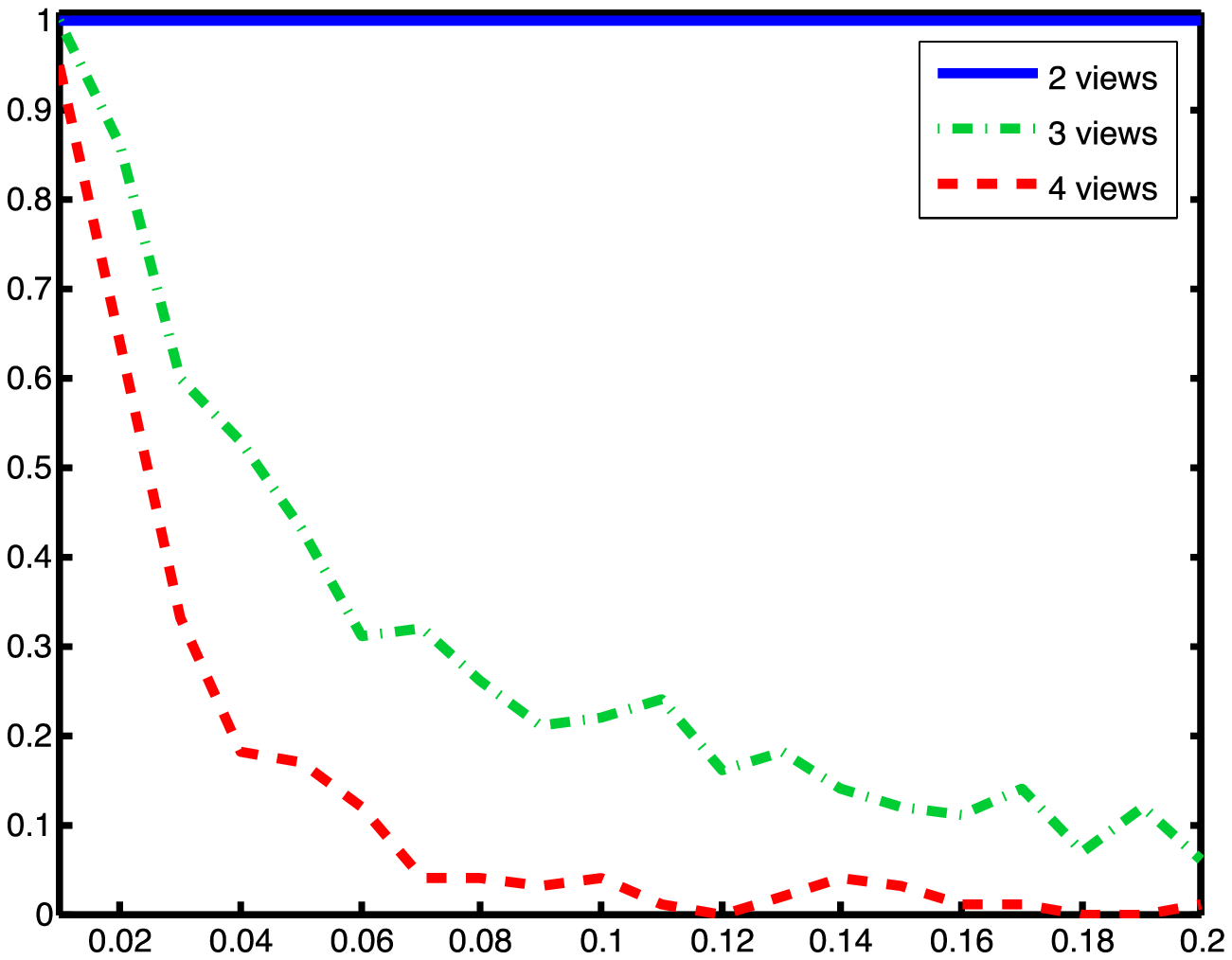}\\
(a) Cameras on a sphere. & (b) Cameras on a circle. & (c) Cameras on a line.
\end{tabular}
\end{center}
\caption{Fraction of synthetic experiments in which Algorithm~\ref{alg:triangulation}
returned {\sf OPTIMAL} versus the standard deviation
of the noise level added to the images. (a) Cameras
placed randomly on the sphere of radius 2.  (b) Three cameras
all placed on the $xy$-plane. (c) Three cameras placed along the $x$-axis.}
\label{fig:synthetic graph}
\end{figure}

We first note that for $n=2$ cameras, in all three cases, independent of camera geometry and noise, we are able to solve the triangulation problem optimally $100\%$ of the time. This experimentally validates Theorem~\ref{thm:two-view-exact} and provides strong evidence that for $n=2$ there is no gap between Theorems~\ref{thm:sdp-exactness} and~\ref{thm:extract solution} in practice.
%\sameer{something about hartley and sturm}

Observe that for cameras on a sphere (Figure~\ref{fig:synthetic graph}(a)), the algorithm performs very well, and while the performance drops as the noise is increased, the drop is not significant for practical noise levels. Another interesting feature of this graph is that finding and certifying optimality becomes easier with increasing number of cameras. The reason for this increase in robustness is not clear and we plan to further pursue this intriguing  observation in future research.

Cameras on a circle (Figure~\ref{fig:synthetic graph}(b)) is one of the degenerate cases of our algorithm where  $\overline{V}_{P} \ne W_{P}$. We expect a higher rate of failure here and indeed this is the case. It is however worth noting that the algorithm does not immediately breakdown and shows a steady decline in performance as a function of noise like the previous experiment. Unlike cameras on a sphere, increasing the number of cameras does not increase the performance of the algorithm, which points to the non-trivial gap between $\overline{V}_{P}$ and $W_{P}$ for co-planar cameras.

Finally let us consider the hard problem of triangulating a point when the camera is moving on a line. This case is hard geometrically and algorithmically as the cameras are co-planar, and this difficulty is reflected in the performance of the algorithm. In contrast to the previous experiments, we observe rapid degradation with increasing noise. 

\subsection{Real Data}

We tested Algorithm~\ref{alg:triangulation} on four real-world data sets: the {\sf Model House}, {\sf Corridor}, and {\sf Dinosaur} from Oxford University and the {\sf Notre Dame} data set~\cite{snavely-et-al-2006}. Our results are summarized in Table \ref{tab:real data}.
\begin{table}
\renewcommand{\tabcolsep}{0.2cm}
\begin{center}
\caption{Performance of Algorithm~\ref{alg:triangulation} on real data. The column {\sf OPTIMAL} reports the fraction of triangulation problems which were certified optimal.}
\label{tab:real data}
\begin{tabular}{lrrrr}
\toprule
Data set & \# images & \# points & {\sf OPTIMAL} & Time (sec)\\
\midrule
%\hline
{\sf Model House} & 10 & 672 & 1.000 & 143\\
%\hline
{\sf Corridor} & 11 & 737 & 0.999 & 193\\
%\hline
{\sf Dinosaur} & 36 & 4983 & 1.000& 960\\
%\hline
{\sf Notre Dame} & 48 & 16,288 & 0.984 & 7200\\
\bottomrule
\end{tabular}
\end{center}
\end{table}

{\sf Model house} is a simple data set where the camera moves 
laterally in front of the model house.
Global optimality was achieved in all cases.

{\sf Corridor} is a geometrically hard sequence where most of the camera motion is
forward, straight down a corridor. This is similar to the synthetic experiment
where the cameras were all on the $x$-axis. The algorithm returned {\sf OPTIMAL} in all but one case.

{\sf Dinosaur} consists of images of a (fake) dinosaur rotating on a turntable in front of a fixed camera. Even though the camera configuration is hard for our algorithm (cameras are co-planar),  global optimality was achieved in all cases. 

The three Oxford datasets are custom captures from the same camera. {\sf Notre Dame} consists of images of the Notre Dame cathedral downloaded from {\sf Flickr}. The data set comes with estimates of the radial distortion for each camera, which we ignore as we are only considering projective cameras in this paper. Algorithm~\ref{alg:triangulation}
returned {\sf OPTIMAL} in $98.36\%$ of cases.

It is worth noting here that  the synthetic datasets  were designed to test the limits of the algorithm as a function of noise.  A standard deviation equal to $0.2$ translates to image noise of approximately $10\%$ of the image size. In practice such high levels of noise rarely occur, and when they do, they typically correspond to outliers. The results in Table~\ref{tab:real data} indicate that the algorithm has excellent performance in practice.

%!TEX root = paper.tex

\section{Discussion}
\label{sec:discussion}

We have presented a semidefinite programming algorithm for triangulation.
In practice it usually returns a global optimum and a certificate of optimality in polynomial time. 
Of course there are downsides which must be taken into consideration.
Solving SDPs is not nearly as fast as gradient descent methods.
Moreover, what happens in the rare cases that global optimality
is not certifiable? Regardless, the lure of a polynomial time algorithm and a better understanding of the geometry of the triangulation problem is far
too great to allow these hiccups to close the door on SDPs.

Hartley and Sturm \cite{hartley-sturm-1997} show that two-view
triangulation can be solved by finding the roots of a degree 6 univariate polynomial.
For $n=3$,  \cite{stewenius-et-al-2005} gives a Gr\"obner basis based algorithm for triangulation by finding all solutions to a polynomial system with 
$47$ roots. These methods do not extend in a computationally useful manner to $n \geq 4$. Our algorithm solves the triangulation problem for all values of $n \geq 2$ under the conditions of Theorem~\ref{thm:extract solution} when the camera centers are not co-planar. 

Similar to our setup, Kanatani et al. also frame triangulation as finding the closest point on a variety from noisy observations~\cite{kanatani-et-al-2010}. Unlike our work, they use both epipolar and trifocal constraints, which makes the constraint set much more complicated. Further, they do not prove finite convergence or the optimality of their procedure. Our work answers their question of analyzing the shape of the variety to obtain a {\em noise threshold} for optimality guarantees.

Semidefinite relaxations for optimization problems in multi-view geometry  were first studied by  Kahl \& Henrion in \cite{kahl-henrion-2007}.  They formulate triangulation as an optimization problem over quartic inequalities and study its moment relaxations. Kahl \& Henrion observe that good solutions can be obtained from the first few relaxations. In our method the very first relaxation already yields high quality solutions.
Further, the quadratic equality based formulation has nice theoretical properties that explain the empirical performance of our algorithm.

Hartley \& Seo  proposed a method for verifying global optimality of a given solution to the triangulation problem~\cite{hartley-seo-2008}. The relation between their test and the definiteness test of Theorem~\ref{thm:extract solution} is a fascinating direction for future research.

The study of QCQPs has a long history. There is a wide body of work devoted to verifying optimality of a given solution for various classes of QCQPs and using semidefinite programming to solve them. Thus it is natural that statements similar to Theorems~\ref{thm:sdp-exactness} and \ref{thm:extract solution}  for various subclasses of QCQPs have appeared several times in the  past e.g.~\cite{jeyakumar-et-al-2007,li-2012,pinar-2004}.

The astute reader will notice that even though Theorems~\ref{thm:sdp-exactness} and \ref{thm:extract solution} are presented for triangulation, the proofs apply to any QCQP with equality constraints, and can be interpreted in terms of the Karush-Kuhn-Tucker (KKT) conditions for~\eqref{eq:matrix-polynomial}. More recently, and independently from us, Zheng et al. have proved versions of Theorems~\ref{thm:sdp-exactness} and \ref{thm:extract solution} for the more general case of inequality-constrained QCQPs~\cite{zheng-et-al-2012}. Our formulation of triangulation as minimizing distance to a variety allows for a geometric interpretation of these optimality conditions, which in turn explains the performance of our algorithm.

Finally, we use the general purpose SDP solver {\tt SeDuMi} for our experiments. Much improvement can be expected from a custom SDP solver which makes use of the explicit symmetry and sparsity of the triangulation problem (e.g. for all $n$ the linear matrix  constraints in the primal semidefinite program~\eqref{primal sdp} are sparse and have rank at most five).

\subsection*{Acknowledgements}
It is a pleasure to acknowledge our conversations with  Pablo Parrilo, Jim Burke, Greg Blekherman and Fredrik Kahl. In particular Fredrik Kahl introduced us to the work of Heyden \& {\AA}str{\"o}m~\cite{heyden-astrom-1997}. Chris Aholt and Rekha Thomas were supported by the NSF grant DMS-1115293.

%-------------------------------------------------------------------------

\bibliographystyle{abbrv}
\bibliography{eccv2012_sdp}

% supplementary material goes here.
%!TEX root = paper.tex

\section*{Appendix}

\subsection*{Failure of Theorem~\ref{thm:extract solution} for two-view triangulation}
Consider the cameras
\begin{equation}
A_1 = \begin{bmatrix}0&0&1&0\\0&1&0&0\\-1&0&0&a\end{bmatrix}\qquad
A_2 = \begin{bmatrix}0&0&1&0\\0&1&0&0\\-1&0&0&b\end{bmatrix},
\end{equation}
where $0<a<b$.  This models the situation in which two cameras are placed on the $x$-axis at $(a,0,0)$ and $(b,0,0)$, respectively, both facing the origin (i.e. viewing down the $x$-axis).

We can show that, given $\varepsilon>0$ and $\widehat{x}=(\widehat{x}_1,\widehat{x}_2)=\left(\begin{bmatrix}0;\varepsilon\end{bmatrix},\begin{bmatrix}\varepsilon;0\end{bmatrix}\right)$, the minimum value of the QCQP~\eqref{eq:matrix-polynomial} is $\varepsilon^2$ and all points of the form 
\begin{align}
    x_1^* &= \begin{bmatrix} \sqrt{\mu(\varepsilon -\mu)}; & \mu \end{bmatrix}\\
    x_2^* &= \begin{bmatrix} \varepsilon - \mu; & \sqrt{\mu(\varepsilon -\mu)}\end{bmatrix}
\end{align}
where $0\le \mu\le \varepsilon$ are optimal.

%The single bilinear constraint in this case is $f=x_1y_2-x_2y_1=0$.
%We construct an observation $\widehat{x}$ with infinitely many minimizers for~\eqref{eq:matrix-polynomial}.  

The non-uniqueness of the minimizer implies that the verification matrix $I+\lambda H_{12}$ cannot be positive definite. So in this case, the hypothesis of Theorem~\ref{thm:extract solution} is not satisfied, although Theorem~\ref{thm:two-view-exact} still holds.

The proof of these facts is a straightforward application of the KKT conditions. The minimum distance $\varepsilon^2$ can be attained by setting one of the image points equal to the epipole, but not both. However, the minimum distance is also attained at points which correspond to neither image being the epipole.

\subsection*{Proof of Lemma~\ref{lem:lagrange-sos}}
%\begin{proof}
Recall conditions (i) $\nabla g(x^{*})+\sum\lambda_{ij}\nabla f_{ij}(x^{*})=0$, and (ii) $I+\sum\lambda_{ij}H_{ij}\succeq 0$.  Let $H=\sum\lambda_{ij}H_{ij}$, $b=\sum\lambda_{ij}b_{ij}$, and $\beta=\sum\lambda_{ij}\beta_{ij}$. Then
\begin{align}
L(x,\lambda_{ij},g(x^*)) &= g(x)+\sum\lambda_{ij}f_{ij}(x)-g(x^*)\\
&= x^{\top}\left(I+H\right)x + 2\left(b-\widehat x\right)^{\top}x + \widehat x^{\top}\widehat x + \beta -g(x^{*}).\label{eq:long-L}
\end{align}
We wish to show that~\eqref{eq:long-L} is equal to $\left(x-x^{*}\right)^{\top}\left(I+H\right)\left(x-x^{*}\right)$.
The fact that $L(x,\lambda_{ij},g(x^*))\ge 0$ is then immediate because $I+H\succeq 0$.
Looking at the quadratic, linear, and constant terms separately, it suffices to prove
\begin{equation}
{\rm (a)}\ (I+H)x^* = \widehat x-b\qquad\text{and}\qquad{\rm (b)}\ {x^{*}}^{\top}\left(I+H\right){x^{*}}= \widehat x^{\top}\widehat x + \beta -g(x^{*}).
\end{equation}

Indeed, (a) is just a restatement of condition (i), since $\nabla g(x) = 2(x-\widehat x)$ and $\nabla f_{ij}(x) = 2(H_{ij}x+b_{ij})$. To prove (b), first recall that since $x^{*}$ is feasible for problem~\eqref{eq:matrix-polynomial}, $f_{ij}(x^{*})=0$ for all $i,j$.  In particular, $\beta = -2b^{\top}x^{*} - {x^{*}}^{\top}Hx^{*}.$
Using this and part (a), one can verify (b) through straightforward manipulations.\qed

\subsection*{Proof of Lemma~\ref{lem:homogeneous}}
%\begin{proof}
%First recall that any symmetric matrix $M\in\mathcal{S}^k$ is positive semidefinite if and only if $y^{\top}My\ge 0$ for all $y\in\mathbb{R}^k$.

Suppose the matrix $M = \left[A, b\,; b^{\top}, c\right]$ is not positive semidefinite, i.e. there exists $y$ such that $y^{\top}My<0$. Write $y = [y'; \gamma]$ for some $\gamma\in\mathbb{R}$. If $\gamma=0$, then $0>y^{\top}My={y'}^{\top}Ay'$.  But then we arrive at a contradiction by considering $x=\mu y'$ for a scalar $\mu\in\mathbb{R}$, since for $\mu$ large enough, $x^{\top}Ax+2b^{\top}x+c<0$.

Now if $\gamma\ne 0$, setting $x=y'/\gamma$ gives
\begin{equation}
x^{\top}Ax+2b^{\top}x+c = \frac{1}{\gamma^2}\left(y^{\top}My\right)<0,
\end{equation}
which again contradicts the hypothesis.\qed

%\end{proof}

\end{document}